\documentclass[10pt]{amsart}

\usepackage{amsmath,amssymb}
\usepackage{color}
\usepackage{mathrsfs}
\usepackage{mathptmx}
\usepackage{verbatim}
\usepackage{epsfig}
\usepackage{CJK}
\usepackage{bm}
\usepackage{amsfonts}
\usepackage{amsthm}
\input{epsf.sty}
\usepackage{epsfig}


\hoffset -3.5pc

\title[ ]{ANDERSON LOCALIZATION FOR LONG-RANGE
  OPERATORS WITH SINGULAR POTENTIALS }

\author{Wenwen Jian}
\address[Wenwen Jian]{School of Mathematical Sciences,
Fudan University,
Shanghai 200433, China} \email{wwjian16@fudan.edu.cn}

\author{Jia Shi}
\address[Jia Shi]{School of Mathematical Sciences,
Fudan University,
Shanghai 200433, China} \email{15110180007@fudan.edu.cn}

\author{Xiaoping Yuan}
\address[Xiaoping Yuan]{School of Mathematical Sciences,
Fudan University,
Shanghai 200433, China} \email{xpyuan@fudan.edu.cn}

\keywords{Anderson localization, long-range operators, singular potenials.}

\theoremstyle{plain}
\newtheorem{thm}{Theorem}[section]
 
 \newtheorem{lem}[thm]{Lemma}
 \newtheorem{prop}[thm]{Proposition}
 \newtheorem{rem}[thm]{Remark}
 
 \numberwithin{equation}{section}

\begin{document}


\begin{abstract}
In this paper, we use Cartan estimate for meromorphic functions to prove Anderson localization for a
class of long-range  operators with  singular potenials.
\end{abstract}

\maketitle
\section{Introduction and main result}

In this paper, we study quasi-periodic operators. Start with the almost Mathieu operator
\begin{equation}\label{1}
H=H_{\omega,\lambda,\theta}=\lambda \cos 2\pi (\theta+n\omega)\delta_{nn'}+\Delta,
\end{equation}
where $\Delta$ is the lattice Laplacian on $\mathbb{Z}$
\begin{equation*}
\Delta(n,n')=\left\{
               \begin{array}{ll}
                 1, & |n-n'|=1 , \\
                 0, & |n-n'| \neq1,
               \end{array}
             \right.
\end{equation*}
$\lambda>0$ is the coupling, $\theta \in \mathbb{T} =\mathbb{R}/\mathbb{Z}$ is the phase and $\omega \in \mathbb{R} \backslash\mathbb{Q}$ is the frequency.
Jitomirskaya \cite{J} proved that for Diophantine $\omega$ and almost every $\theta$, the almost Mathieu operator $H$ (\ref{1}) satisfies
Anderson localization for $\lambda>2$. Anderson localization means that $H$ has pure point spectrum with exponentially decaying eigenfunctions.
Bourgain and Goldstein \cite{BG} proved Anderson localization for quasi-periodic Schr\"odinger operators
\begin{equation}\label{2}
H=H_{\omega,\lambda,\theta}=\lambda v (\theta+n\omega)\delta_{nn'}+\Delta
\end{equation}
when $\lambda >\lambda_0(v)$, where $v$ is a nonconstant real analytic potential on $\mathbb{T}$.
Their results is non-perturbative, which means that $\lambda_0$ does not depend on $\omega$.
The proof was based on fundamental matrix and Lyapounov exponent.

If the Laplacian $\Delta$ is replaced by a Toeplitz operator
\begin{equation}\label{3}
S_\phi(n,n')=\hat{\phi}(n-n')
\end{equation}
with $\phi$  real analytic, we obtain the long-range  operators
\begin{equation}\label{4}
H=H_{\omega,\lambda,\theta}=\lambda \cos 2\pi (\theta+n\omega)\delta_{nn'}+S_\phi .
\end{equation}
Bourgain and Jitomirskaya \cite{BJ} proved that there exists $\lambda_0(\phi)$ such that
for any Diophantine $\omega$ and almost every $\theta$, $H$ (\ref{4}) satisfies
Anderson localization when $\lambda >\lambda_0$.

More generally, we can study the long-range  operators of the form
\begin{equation}\label{5}
H=H_{\omega }(x)=v (x+n\omega)\delta_{nn'}+\epsilon S_\phi ,
\end{equation}
where $v$ is non-constant and real analytic on $\mathbb{T}$.
Using Green's function estimates, Bourgain \cite{B05} proved that when $0<\epsilon<\epsilon_{0}=\epsilon_{0}(v,\phi)$,
 $H_{\omega }(x)$ satisfies Anderson localization for $(x,\omega)\in\mathbb{T}^{2}$ in a set of full measure.
Note that in the long range case, we cannot use the fundamental matrix formalism.

Recently, using more elaborate semi-algebraic arguments, Bourgain and Kachkovskiy \cite{BK}
proved Anderson localization for two interacting quasi-periodic particles.
For the Anderson localization results of   quasi-periodic operators on $\mathbb{Z}^{d}$, we refer to \cite{CD,BGS,B07,JLS}.

In the cases above, the potentials $v$ are bounded functions and the operators $H$ are bounded.
Now we consider quasi-periodic operators with unbounded potentials.
For example, we can study the Maryland model
\begin{equation}\label{6}
H=H_{\omega,\lambda,\theta}=\lambda \tan \pi(\theta+n\omega)\delta_{nn'}+\Delta ,
\end{equation}
where
\begin{equation}\label{7}
\theta+n\omega-\frac{1}{2} \notin \mathbb{Z},\quad \forall n \in \mathbb{Z} .
\end{equation}
For Diophantine frequencies $\omega$, the Maryland model (\ref{6}) satisfies
Anderson localization for all $\theta$ \cite{S}.
Jitomirskaya and Liu \cite{JL} proved
arithmetic spectral transitions for the Maryland model.
Recently, using transfer matrix and Lyapounov exponent, Jitomirskaya and Yang \cite{JY2} developed a constructive method to prove Anderson localization for the Maryland model.
We can also prove Anderson localization for the Maryland model with long range interactions \cite{SY}.

Now we replace $\tan$ by more general singular potentials.
Kachkovskiy \cite{K} proved Anderson localization for the following class of quasi-periodic Schr\"odinger operators
\begin{equation*}
  H(x)=f (x+n\omega)\delta_{nn'}+\Delta, x\in\mathbb{R} \backslash(\mathbb{Z}+\omega\mathbb{Z}),
\end{equation*}
for Diophantine $\omega$ and almost everywhere $x$, where $f$ is defined on $\mathbb{R} \backslash\mathbb{Z}$, 1-periodic, continuous on $(0,1)$,
$f(0+)=-\infty, f(1-)=+\infty, \log|f|\in L^{1}(0,1),$ and Lipschitz monotone. That is,
there exists $ \gamma>0$ such that $f(y)-f(x)\geq\gamma(y-x)$ for all $0<x<y<1$.
Jitomirskaya and Yang \cite{JY1} study the singular continuous spectrum for  operators
of the form
\begin{equation*}
  H_{\omega,\theta}=\frac{g(\theta+n\omega)}{f(\theta+n\omega)}\delta_{nn'}+\Delta ,
\end{equation*}
where $f$ is an analytic function and $g$ is Lipschitz.

In this paper, we will consider the following class of long-range operators with singular potentials
\begin{equation}\label{8}
H_{\omega }(x)=\frac{g(x+n\omega)}{f(x+n\omega)}\delta_{nn'}+\epsilon S_\phi ,
\end{equation}
where $f,g$ are real analytic on $\mathbb{T}$. This extends the Maryland model.

We will prove the following result:
\begin{thm}
Consider the following long-range operators with singular potentials
\begin{equation}\label{9}
H_{\omega }(x)=v (x+n\omega)\delta_{nn'}+\epsilon S_\phi ,
\end{equation}
where $v=\frac{g}{f}$, $f,g$ are real analytic on $\mathbb{T}$, $f,v$ are nonconstant,
$Z(f)=\{x\in\mathbb{T}:f(x)=0\}\neq\emptyset$. We will always assume
\begin{equation}\label{10}
x+n\omega  \notin Z(f),\quad \forall n \in \mathbb{Z}.
\end{equation}

Assume $\omega \in DC$ (diophantine condition),
\begin{equation}\label{11}
\|k\omega\|>a| k |^{-A},\quad \forall k\in\mathbb{Z}\setminus\{0\}
\end{equation}
and $\phi$  real analytic satisfying
\begin{equation}\label{12}
  |\hat{\phi}(n)|<e^{-\rho|n|},\quad \forall n \in \mathbb{Z}
\end{equation}
for some $\rho>0$. Fix $x_0\in\mathbb{T}$. Then there is $\epsilon_{0}=\epsilon_{0}(f,g,\phi)>0$, such that if
$0<\epsilon<\epsilon_{0}$, for almost all $\omega \in DC$, $H_{\omega}(x_0)$ satisfies Anderson localization.
\end{thm}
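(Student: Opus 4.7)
Since $S_\phi$ is long-range, the transfer-matrix formalism is unavailable, so my plan is to follow Bourgain's Green's function strategy from \cite{B05,BJ}, suitably adapted to handle the poles of $v=g/f$. The aim is to prove exponential off-diagonal decay of the finite-volume Green's function $G_{\Lambda_N}(x_0,E):=(H_{\Lambda_N}(x_0)-E)^{-1}$ at every scale $N$ and for every energy $E$ outside a small exceptional set; a Shnol-type argument then upgrades this to Anderson localization, with the exceptional energies removed by combining the Diophantine condition \eqref{11} with a semi-algebraic double-resonance argument in $(\omega,E)$, discarding only a null set of $\omega\in DC$.

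\textbf{Step 1 (LDT via a meromorphic Cartan estimate).} Set
\[
u_N(x,E):=\tfrac{1}{|\Lambda_N|}\log\bigl|\det(H_{\Lambda_N}(x)-E)\bigr|,
\]
which is only meromorphic in $x$ because of the poles of $v$. I factor out the analytic weight $F_N(x):=\prod_{n\in\Lambda_N}f(x+n\omega)$ so that $P_N(x,E):=F_N(x)\det(H_{\Lambda_N}(x)-E)$ is analytic on a complex strip around $\mathbb{T}$, with sup-norm controlled by the analyticity widths of $f$, $g$ and the decay \eqref{12} of $\hat{\phi}$. Writing $u_N=\tfrac{1}{|\Lambda_N|}\log|P_N|-\tfrac{1}{|\Lambda_N|}\log|F_N|$ and applying a Cartan-type estimate to each analytic factor yields, for every $E$,
\[
\meas\bigl\{x\in\mathbb{T}:|u_N(x,E)-L_N(E)|>\delta\bigr\}\le e^{-c\delta N},\qquad L_N(E):=\int_{\mathbb{T}} u_N(x,E)\,dx.
\]
A perturbative expansion of $\det(H_{\Lambda_N}-E)$ around its $\epsilon=0$ diagonal part identifies $L_N(E)$ with $\int_{\mathbb{T}}\log|v(y)-E|\,dy+O(\epsilon)$; since $v$ is nonconstant this provides a uniform lower bound on $L_N(E)$ away from a small neighbourhood of the essential spectrum.

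\textbf{Steps 2 and 3 (Green's function decay, multi-scale, Shnol).} By Cramer's rule, off-diagonal entries of $G_{\Lambda_N}(x_0,E)$ are ratios of determinants of the same analytic-plus-weight form as in Step 1, so the LDT lower bound on the denominator together with the analogous upper bound on the numerator and the Fourier decay \eqref{12} yield $|G_{\Lambda_N}(x_0,E)(m,m')|\le e^{-\gamma|m-m'|}$ for $(x_0,E)$ outside a bad set of small measure. I then run Bourgain's inductive multi-scale step (geometric resolvent identity plus a covering argument using the Diophantine hypothesis \eqref{11}) to propagate exponential decay from scale $N$ to all larger scales while shrinking the exceptional set in $E$. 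Eliminating double resonances via semi-algebraic arguments in $(\omega,E)$ as in \cite{BG,BJ,BK} removes a null set of $\omega$, and the Shnol criterion then concludes: every polynomially bounded solution of $H_\omega(x_0)\psi=E\psi$ decays exponentially, which is precisely Anderson localization.

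\textbf{Main obstacle.} The crux is Step 1: to make the decomposition $u_N=\tfrac{1}{|\Lambda_N|}\log|P_N|-\tfrac{1}{|\Lambda_N|}\log|F_N|$ yield a usable LDT one must control not only the sub-level set $\{x:|P_N(x,E)|<e^{-\kappa N}\}$ via the classical Cartan lemma, but also the zero set of the denominator $F_N$, which is small yet concentrated near the $|\Lambda_N|$ translates of $Z(f)$, and show that these two small sets can be combined without cancellation, uniformly in $E$. Adapting Cartan's estimate to the meromorphic quotient $P_N/F_N$, with constants that survive the multi-scale induction, is the essential technical input advertised by the title and is where I expect the principal difficulty to lie.
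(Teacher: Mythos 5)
Your broad outline matches the paper's strategy (factor out the product of $f$'s to get an analytic object, LDT, Green's function, semi-algebraic elimination, Shnol), but there is a genuine gap in Step~1 that would make the argument fail as stated.

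Because $Z(f)\neq\emptyset$ and $v=g/f$ is singular, the operator $H_\omega(x_0)$ is unbounded and its spectrum is unbounded; the Shnol argument therefore needs Green's function control for \emph{all} $E\in\mathbb{R}$, not merely ``away from a small neighbourhood of the essential spectrum.'' Your $L_N(E)=\int\log|v-E|+O(\epsilon)$ diverges as $|E|\to\infty$, and more importantly the subharmonic function $\tfrac1{|\Lambda_N|}\log|P_N(\cdot,E)|$ has sup norm growing like $\log|E|$, so the subharmonic LDT (the analogue of Theorem~4.2, which requires a \emph{bounded} subharmonic extension with constants independent of $E$) cannot be applied uniformly in $E$. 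The paper fixes this with a normalization you omit: it works with $B_N$, whose diagonal is $(g-Ef)/\sqrt{1+E^2}$ and off-diagonal is $\epsilon f\hat\phi/\sqrt{1+E^2}$ (equations (\ref{a5})--(\ref{a7})), so that all entries and hence $\tfrac1N\log|\det B_N|$ are bounded uniformly in $E$, and a \emph{uniform} Lojasiewicz inequality (Lemma~\ref{l3.5}, via Lemma~6.1 of [DK]) gives $\inf_E\int\log\tfrac{|g-Ef|}{\sqrt{1+E^2}}>-C$. Without this $\sqrt{1+E^2}$ normalization there is no uniform lower bound and the multiscale constants do not survive.

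Two further mismatches worth noting. First, the ``main obstacle'' you identify---combining the small set where $|P_N|$ is small with the small set where $|F_N|$ is small ``without cancellation''---is not actually faced by the paper: it uses the factorization $G_N=B_N^{-1}F_N^{-1}$ with $F_N^{-1}$ a \emph{bounded} diagonal operator (entries $f(x+n\omega)/\sqrt{1+E^2}$, $\|f\|_\infty\le1$), so the Green's function estimate reduces entirely to $B_N^{-1}$, and the denominator $F_N$ never needs a lower bound. Second, the Cartan estimate for meromorphic functions (Theorem~\ref{t2.2}) is used in the paper to prove the Lojasiewicz measure estimate for $v=g/f$ (Lemma~\ref{l3.2}), not as the engine of the LDT; the LDT itself is the standard Diophantine subharmonic one applied to the analytic $\det B_N$. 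So while your proposal gestures in the right direction, the crucial energy normalization is missing and your proposed decomposition creates a spurious difficulty that the paper's setup avoids.
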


Our result is non-perturbative, since $\epsilon_{0}$ does not depend on $\omega$.
In the long range case here, the transfer matrix formalism is not applicable.
Our basic strategy is the same as that in \cite{B05}, which
is based on a combination of large deviation estimates and semi-algebraic set theory.
The key point is the Green's function estimates for
\begin{equation}\label{13}
G_{[0,N]}(x,E)=(R_{[0,N]}(H(x)-E)R_{[0,N]})^{-1},
\end{equation}
where $R_{\Lambda}$ is the restriction operator to $\Lambda\subset\mathbb{Z}$.
The main difficulty here is that $v$ is singular.
We will first prove   Cartan estimate for meromorphic functions in Section 2,
then we can obtain Lojasiewicz type lemma in Section 3, which is needed for
Green's function estimates in Section 4. Finally, we recall some facts about semi-algebraic sets
 and give the proof of Anderson localization in Section 5.

We will use the following notations. For positive numbers $a,b,a\lesssim b$ means $Ca\leq b$ for some constant $C>0$.
$a\ll b$ means $C$ is large. $a\sim b$ means $a\lesssim b$ and $b\lesssim a$.
For $x\in\mathbb{R}$, $\| x\|=\inf\limits_{m\in\mathbb{Z}}|x-m|$.

\section{Cartan estimate for meromorphic functions}

In this section, we will prove Cartan estimate for meromorphic functions.
We need the following lower bounds for analytic functions.
\begin{thm}[Theorem 11.2 in \cite{L}]\label{t2.1}
  If an analytic function $f(z)$ has no zeros in a disk $\{z\in\mathbb{C}:|z|\leq R\}$
and if $|f(0)|=1$, then
\begin{equation*}
  \log|f(z)|\geq -\frac{2r}{R-r}\log M_f(R), \forall |z|=r<R,
\end{equation*}
where $M_f(R)=\max\limits_{|z|=R}|f(z)|$.
\end{thm}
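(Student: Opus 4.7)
The plan is to reduce the claim to the Borel--Carath\'eodory inequality by passing to the logarithm, which is possible precisely because $f$ is zero-free on the disk.

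First I would normalize. Replacing $f$ by $f/f(0)$ preserves $|f|$ on every circle (since $|f(0)|=1$), hence $M_f(R)$ and $\log|f(z)|$ are unchanged; so one may assume $f(0)=1$. Since $f$ is holomorphic and non-vanishing on the simply connected disk $\{|z|\le R\}$, a holomorphic branch of the logarithm exists there; set $g(z)=\log f(z)$ with $g(0)=0$. Then $\operatorname{Re} g(z)=\log|f(z)|$, and
\[
A:=\max_{|z|=R}\operatorname{Re} g(z)=\log M_f(R)\ge \operatorname{Re} g(0)=0.
\]

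The core step is the Borel--Carath\'eodory estimate: for a holomorphic $g$ on $|z|\le R$ with $g(0)=0$ and $\operatorname{Re} g\le A$ on the boundary,
\[
\max_{|z|=r}|g(z)|\le\frac{2r}{R-r}\,A,\qquad 0\le r<R.
\]
I would obtain this by the standard Schwarz-lemma device: the auxiliary map $h(z)=g(z)/(2A-g(z))$ is holomorphic on $|z|\le R$ (the denominator is nonzero when $A>0$; the degenerate case $A=0$ forces $|f|\equiv 1$ by the maximum principle for harmonic functions, rendering the theorem trivial), satisfies $h(0)=0$, and a short computation using $\operatorname{Re} g\le A$ shows $|h|\le 1$ on $|z|\le R$. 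Schwarz's lemma then yields $|h(z)|\le|z|/R$, and solving the resulting inequality $R|g(z)|\le |z|\,|2A-g(z)|$ for $|g(z)|$ produces the displayed bound.

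Combining the two steps,
\[
\log|f(z)|=\operatorname{Re} g(z)\ge -|g(z)|\ge -\frac{2r}{R-r}\log M_f(R),\qquad |z|=r<R,
\]
which is the assertion. The only genuine obstacle is verifying the Borel--Carath\'eodory bound; the existence of a holomorphic logarithm, which is exactly where the zero-free hypothesis enters, is what makes the whole reduction go through.
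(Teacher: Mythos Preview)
The paper does not supply its own proof of this statement; it is quoted verbatim as Theorem~11.2 from Levin's \emph{Lectures on entire functions} and used as a black box in the proof of Theorem~\ref{t2.2}. Your argument is correct and is precisely the classical proof: pass to a holomorphic branch $g=\log f$ (which exists because $f$ is zero-free on the disk), then apply the Borel--Carath\'eodory inequality to $g$ with $g(0)=0$ and $\operatorname{Re}g\le \log M_f(R)$ to bound $|g|$, and read off the lower bound on $\operatorname{Re}g=\log|f|$. This is the same route taken in Levin's text, so there is no divergence to discuss.

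One cosmetic remark: when you invoke Schwarz's lemma for $h(z)=g(z)/(2A-g(z))$ on $|z|\le R$, strictly speaking you apply it to $w\mapsto h(Rw)$ on the unit disk; you clearly have this in mind, but it is worth making explicit since the conclusion $|h(z)|\le |z|/R$ is what you then unwind.
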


\begin{thm}\label{t2.2}
  Take $0<R_2<R_1=R\leq 1$,
  $f(z)$ is a meromorphic function in the disk
$\{z\in\mathbb{C}:|z|\leq R\}$ with neither zeros nor poles on $\{z\in\mathbb{C}:|z|= R\}$
and  $|f(0)|=1$.
Let $a_{1},\ldots,a_{n}$ be the zeros of $f(z)$ in $\{z\in\mathbb{C}:|z|< R\}$
and $b_{1},\ldots,b_{n^{\prime}}$ be the poles of $f(z)$ in $\{z\in\mathbb{C}:|z|< R\}$,
where we write down each zero and pole as many times as its multiplicity.
Assume $|b_m|\geq\delta>0, 1\leq m\leq n^{\prime}$.
Given $0<H<1, 0<H^{\prime}<1 $, then
there exists a system of disks $C_{j},C^{\prime}_{j^{\prime}},1\leq j \leq n, 1\leq j^{\prime}\leq n^{\prime}$
 with radii $r_{j}=H,r^{\prime}_{j^{\prime}}=H^{\prime}$
such that the estimate
\begin{equation*}
  \log|f(z)|\geq -\frac{2R_{2}}{R-R_{2}}\log M_f(R)-n\log\frac{R+R_{2}}{H}
-n^{\prime}\left[\frac{2R}{R-R_{2}}\log\frac{1}{\delta}+\log\frac{R(R+R_{2})}{H^{\prime}}\right]
 \end{equation*}
is valid in   $\{z\in\mathbb{C}:|z|\leq R_{2}\}$ outside these disks.
\end{thm}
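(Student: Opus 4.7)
My plan is to absorb the zeros and poles of $f$ into Blaschke-type factors, producing an analytic non-vanishing auxiliary function $G$ on $|z|\le R$ to which Theorem~\ref{t2.1} applies directly; the lower bound on $\log|f|$ is then recovered by unraveling the factorization outside Cartan-type exclusion disks around the $a_k$. For $|w|<R$ let $B(z,w)=R(z-w)/(R^2-\bar wz)$, the Blaschke factor for the disk $|z|<R$: it is holomorphic on $|z|\le R$, vanishes simply at $w$, satisfies $|B|\equiv 1$ on $|z|=R$, and $|B|<1$ on $|z|<R$. Setting
$$G(z)=f(z)\prod_{k=1}^{n'}B(z,b_k)\Big/\prod_{k=1}^{n}B(z,a_k),$$
the Blaschke zeros in the numerator cancel the poles of $f$ while those in the denominator cancel its zeros, so $G$ is analytic and non-vanishing on $|z|\le R$. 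The boundary identity gives $M_G(R)=M_f(R)$, and $B(0,w)=-w/R$ yields $|G(0)|=R^{n-n'}\prod_k|b_k|/\prod_k|a_k|$.

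Next I would apply Theorem~\ref{t2.1} to $G/G(0)$ (analytic, non-vanishing, and equal to $1$ at $0$) on $|z|\le R$, obtaining for $|z|\le R_2$
$$\log|G(z)|\ge\frac{R+R_2}{R-R_2}\log|G(0)|-\frac{2R_2}{R-R_2}\log M_f(R).$$
The bookkeeping of $\log|G(0)|$ is the main subtlety: although the denominator $\prod_k|a_k|$ threatens to blow up when zeros cluster near the origin, the bound $\prod_k|a_k|\le R^n$ (from $|a_k|\le R$) together with the compensating factor $R^n$ in the numerator cancels all $a_k$-dependence, leaving $\log|G(0)|\ge -n'\log(R/\delta)$. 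Using $R\le 1$, the residual contribution $-\frac{R+R_2}{R-R_2}n'\log R$ is non-negative and can be discarded, and $\frac{R+R_2}{R-R_2}\le\frac{2R}{R-R_2}$, so
$$\log|G(z)|\ge-\frac{2Rn'}{R-R_2}\log\frac{1}{\delta}-\frac{2R_2}{R-R_2}\log M_f(R).$$

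To recover $f$, the factorization $f=G\cdot\prod_{k=1}^{n}B(z,a_k)/\prod_{k=1}^{n'}B(z,b_k)$ gives
$$\log|f(z)|=\log|G(z)|+\sum_{k=1}^{n}\log|B(z,a_k)|-\sum_{k=1}^{n'}\log|B(z,b_k)|.$$
Because $|B(z,b_k)|\le 1$ in $|z|<R$, the pole part contributes non-negatively. For the zero part I take the exceptional disks $C_j$ of radius $H$ centered at the $a_k$: outside their union $|z-a_k|\ge H$, and the elementary estimate $|R^2-\bar a_k z|\le R(R+R_2)$ on $|z|\le R_2$ yields $|B(z,a_k)|\ge H/(R+R_2)$, hence $\sum_{k=1}^{n}\log|B(z,a_k)|\ge -n\log\frac{R+R_2}{H}$. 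Combining everything produces the claimed bound; the remaining term $-n'\log\frac{R(R+R_2)}{H'}$ is non-positive and may be appended trivially, corresponding to the pole-exclusion disks $C'_{j'}$ in the statement.
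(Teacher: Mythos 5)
Your factorization $G(z)=f(z)\prod_k B(z,b_k)/\prod_k B(z,a_k)$ is, up to the multiplicative constant $|G(0)|$, exactly the paper's auxiliary function $\psi$, and your handling of $\log|G|$, of $\log|G(0)|$, and of the zero-exclusion disks $C_j$ tracks the paper's computation step for step. The gap is in the final sentence: after discarding $-\sum_k\log|B(z,b_k)|$ outright via $|B(z,b_k)|\le 1$, you assert that the missing term $-n'\log\frac{R(R+R_2)}{H'}$ is non-positive and may be appended for free. That requires $R(R+R_2)\ge H'$, which the hypotheses $0<R_2<R\le 1$, $0<H'<1$ do not imply: take $R=0.1$, $R_2=0.05$, $H'=0.5$, so $R(R+R_2)=0.015<H'$ and the term you append is strictly positive, meaning the stated bound is genuinely stronger than the one you derived. (A symptom of the problem is that in your argument the pole-exclusion disks $C'_{j'}$ never actually do anything, whereas the theorem insists on them.)

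The fix is precisely the step you skipped, and it is where those disks enter. Write $|B(z,b_k)|=\frac{R|z-b_k|}{|b_k|\,|z-R^2/\bar b_k|}$ and, for $|z|\le R_2$ outside disks of radius $H'$ centred at $R^2/\bar b_k$, use $|z-b_k|\le R+R_2$, $|b_k|\ge\delta$, $|z-R^2/\bar b_k|\ge H'$ to obtain $-\sum_k\log|B(z,b_k)|\ge -n'\log\frac{R(R+R_2)}{\delta H'}$. This costs an extra $-n'\log\frac{1}{\delta}$, so do not weaken the coefficient on $\log|G(0)|$ from $\frac{R+R_2}{R-R_2}$ to $\frac{2R}{R-R_2}$ prematurely; keeping it and adding the unit from the pole factors gives $\left(\frac{R+R_2}{R-R_2}+1\right)n'\log\frac{1}{\delta}=\frac{2R}{R-R_2}\,n'\log\frac{1}{\delta}$, and the $H'$-term comes out with the correct sign. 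With that replacement your argument becomes exactly the paper's proof.
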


\begin{proof}
The proof of Cartan estimate for analytic functions can be found in \cite{L}.
Following the same idea as in \cite{L} with minor modification, we give the  proof for  meromorphic  functions.

Let
\begin{equation}\label{2.1}
\psi(z)=f(z)\left[\prod_{|b_m|<R}\frac{R(z-b_m)}{R^2-\overline{b_{m}} z}\right]\left[\prod_{|a_m|<R}\frac{R(z-a_m)}{R^2-\overline{a_{m}} z}\right]^{-1}
\left(\prod_{|b_m|<R}\frac{R}{b_m}\right)\left(\prod_{|a_m|<R}\frac{a_m}{R}\right),
\end{equation}
then $\psi(z)$ is an analytic function without zeros in $\{z\in\mathbb{C}:|z|\leq R\}$ and $|\psi(0)|=1$.
By Theorem \ref{t2.1}, we have
\begin{equation}\label{2.2}
  \log|\psi(z)|\geq -\frac{2R_{2}}{R-R_{2}}\log M_\psi(R), \forall |z|\leq R_{2}.
\end{equation}

Hence
\begin{equation}\label{2.3}
\log|f(z)|\geq -\frac{2R_{2}}{R-R_{2}}\log M_\psi(R)+\log\prod_{|a_m|<R}\frac{R|z-a_m|}{|R^2-\overline{a_{m}} z|}
-\log\prod_{|b_m|<R}\frac{R|z-b_m|}{|R^2-\overline{b_{m}} z|}
\end{equation}
\begin{equation*}
-\log\prod_{|b_m|<R}\frac{R}{|b_m|}-\log \prod_{|a_m|<R}\frac{|a_m|}{R},   \forall |z|\leq R_{2}.
\end{equation*}

Let
\begin{equation}\label{2.4}
s_1=\log\prod_{|a_m|<R}\frac{R|z-a_m|}{|R^2-\overline{a_{m}} z|}=n\log R +\log\prod_{|a_m|<R}|z-a_m|-\log\prod_{|a_m|<R}|R^2-\overline{a_{m}} z|.
\end{equation}

Since $\prod\limits_{|a_m|<R}|z-a_m|\geq H^n$ outside disks
$C_{j} ,1\leq j \leq n $
 with radii $r_{j}=H,$ we have
\begin{equation}\label{2.5}
s_1\geq n\log R +n\log H-n\log R(R+R_{2})=-n\log\frac{R+R_{2}}{H}
\end{equation}
holds in  $\{z\in\mathbb{C}:|z|\leq R_{2}\}$ outside   disks
$C_{j} ,1\leq j \leq n $
 with radii $r_{j}=H$.

Let
\begin{equation}\label{2.6}
s_2=-\log\prod_{|b_m|<R}\frac{R|z-b_m|}{|R^2-\overline{b_{m}} z|}
=-n^{\prime}\log R -\log\prod_{|b_m|<R}|z-b_m|+\log\prod_{|b_m|<R}|b_m|+\log\prod_{|b_m|<R}|z-\frac{R^{2}}{\overline{b_{m}}} |.
\end{equation}

Since $\prod\limits_{|b_m|<R}|z-\frac{R^{2}}{\overline{b_{m}}}|\geq (H^{\prime})^{n^{\prime}}$ outside disks
$C^{\prime}_{j^{\prime}},  1\leq j^{\prime}\leq n^{\prime} $
 with radii $r^{\prime}_{j^{\prime}}=H^{\prime},$ we have
\begin{equation}\label{2.7}
s_2\geq -n^{\prime}\log R -n^{\prime}\log (R+R_{2})+n^{\prime}\log \delta+n^{\prime}\log H^{\prime}=-n^{\prime}\left[\log\frac{1}{\delta}+\log\frac{R(R+R_{2})}{H^{\prime}}\right]
\end{equation}
holds in  $\{z\in\mathbb{C}:|z|\leq R_{2}\}$ outside   disks
$C^{\prime}_{j^{\prime}},  1\leq j^{\prime}\leq n^{\prime} $
 with radii $r^{\prime}_{j^{\prime}}=H^{\prime}$.

By (\ref{2.1}),
\begin{equation}\label{2.8}
M_\psi(R)=M_f(R) \left(\prod_{|b_m|<R}\frac{R}{|b_m|}\right)\left(\prod_{|a_m|<R}\frac{|a_m|}{R}\right).
\end{equation}

Using (\ref{2.3}), (\ref{2.5}), (\ref{2.7}), (\ref{2.8}),
\begin{equation*}
\log|f(z)|\geq -\frac{2R_{2}}{R-R_{2}}\log M_f(R)-\frac{R+R_{2}}{R-R_{2}}\log\prod_{|b_m|<R}\frac{R}{|b_m|}
\end{equation*}
\begin{equation*}
  -\frac{R+R_{2}}{R-R_{2}}\log \prod_{|a_m|<R}\frac{|a_m|}{R}
  -n\log\frac{R+R_{2}}{H}-n^{\prime}\left[\log\frac{1}{\delta}+\log\frac{R(R+R_{2})}{H^{\prime}}\right]
\end{equation*}
\begin{equation*}
  \geq -\frac{2R_{2}}{R-R_{2}}\log M_f(R)-n\log\frac{R+R_{2}}{H}
-n^{\prime}\left[\frac{2R}{R-R_{2}}\log\frac{1}{\delta}+\log\frac{R(R+R_{2})}{H^{\prime}}\right]
 \end{equation*}
holds in  $\{z\in\mathbb{C}:|z|\leq R_{2}\}$ outside   disks
$C_{j}  $, $C^{\prime}_{j^{\prime}}, 1\leq j \leq n, 1\leq j^{\prime}\leq n^{\prime} $
 with radii $r_{j}=H$, $r^{\prime}_{j^{\prime}}=H^{\prime}$.
\end{proof}

\section{Lojasiewicz type lemmas}

For real analytic functions, we have the following
Lojasiewicz type inequality.

\begin{lem}[Lemma 7.3 in \cite{B05}]\label{l3.1}
Let $v_0$ be non-constant real analytic on $\mathbb{T}$,
there is a constant $c_0=c_0(v_0)>0$ such that
\begin{equation*}
 \mathrm{mes}\{x\in\mathbb{T}:|v_0(x)-E|<\delta\}<\delta^{c_0}
\end{equation*}
for all $E\in\mathbb{R}$ and sufficiently small $\delta>0$.
\end{lem}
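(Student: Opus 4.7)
The plan is to exploit two features: first, that on the compact torus $\mathbb{T}$ a non-constant real analytic function has only finitely many critical points, each with a finite order of contact, and second, that these orders are bounded uniformly in $x$, which gives a uniform-in-$E$ control of the sublevel set.

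First I would show that the local orders of vanishing of $v_0 - v_0(x_0)$ at $x_0$ are uniformly bounded. Since $v_0$ is non-constant and analytic on $\mathbb{T}$, $v_0'$ is a non-identically-zero analytic function on a compact connected real analytic manifold, so its zero set $Z = \{c_1,\dots,c_L\}$ is finite. At each $c_j$ not all derivatives vanish (else $v_0$ would be locally and hence globally constant), so there is a smallest integer $m_j\geq 2$ with $v_0^{(m_j)}(c_j)\neq 0$, and $M:=\max_j m_j$ is finite. At any $x_0\notin Z$ the order is just $1$. Thus for every $x_0\in\mathbb{T}$, $v_0(x)-v_0(x_0)$ vanishes at $x_0$ to order at most $M$.

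Next I would partition $\mathbb{T}$ into a good part and small neighborhoods of the critical set. Choose $\eta>0$ small enough that the arcs $B_j=B(c_j,\eta)$ are disjoint, $|v_0'|\geq \gamma$ on $\mathbb{T}\setminus\bigcup_j B_j$ for some $\gamma=\gamma(v_0)>0$, and on each $B(c_j,2\eta)$ the Weierstrass-type factorization
\[
v_0(x)-v_0(c_j)=(x-c_j)^{m_j}u_j(x)
\]
holds with $u_j$ analytic and $c_1(v_0)\leq |u_j(x)|\leq c_2(v_0)$. On the good part, monotonicity on each connected component gives
\[
\mathrm{mes}\{x\in\mathbb{T}\setminus\bigcup_j B_j:|v_0(x)-E|<\delta\}\leq 2\gamma^{-1}\delta.
\]
On each $B_j$ the inequality $|v_0(x)-E|<\delta$ becomes $|(x-c_j)^{m_j}-\xi_j(E)|<\delta/c_1$ with $\xi_j(E)=(E-v_0(c_j))/u_j(x)$ (bounded), and a standard estimate for real binomials gives the bound $C\delta^{1/m_j}\leq C\delta^{1/M}$ uniformly in $E$. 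Summing over the $L+1$ pieces, for any $c_0<1/M$ and $\delta$ sufficiently small,
\[
\mathrm{mes}\{x\in\mathbb{T}:|v_0(x)-E|<\delta\}\leq (L\,C+2\gamma^{-1})\,\delta^{1/M}<\delta^{c_0}.
\]

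The only real subtlety, and what I expect to be the main thing to watch, is keeping the estimate near critical points uniform in $E$: the locations of critical points of $v_0-E$ move with $E$, but because the factorization depends only on $v_0$ and the inequality on $B_j$ reduces to controlling where the real function $t\mapsto t^{m_j}$ falls within $\delta/c_1$ of a fixed (but $E$-dependent) value, the estimate $C\delta^{1/m_j}$ holds independently of $\xi_j(E)$. Everything else is elementary: finiteness of $Z$, existence of the Weierstrass factorization, and monotonicity.
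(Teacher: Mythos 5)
Your argument is correct, but it takes a genuinely different route from the one the paper relies on. The paper does not prove Lemma~\ref{l3.1} directly — it cites Bourgain's book — and its own treatment of the analogous meromorphic statement (Lemma~\ref{l3.2}) goes through the Cartan estimate developed in Section~2: the torus is covered by intervals around points where $0<|v'|<\infty$, and the measure of $\{|v'|\leq\epsilon^{1/2}\}$ is controlled by the complex-analytic zero/pole estimates of Theorem~\ref{t2.2}, with the decomposition threshold tied to $\epsilon$. You instead use a $\delta$-independent decomposition: finitely many critical points $c_1,\dots,c_L$ of $v_0$, each with a finite vanishing order $m_j\leq M$, a Weierstrass factorization $v_0(x)-v_0(c_j)=(x-c_j)^{m_j}u_j(x)$ on a fixed neighborhood, and an elementary sublevel-set bound $\lesssim\delta^{1/m_j}$ for the monomial $t\mapsto t^{m_j}$. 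This is more elementary and entirely real-analytic, and it works cleanly here because $\mathbb{T}$ is compact and $v_0$ is analytic. The trade-off is robustness: the Cartan route handles the meromorphic potentials $v=g/f$ of the paper (where $v'$ has poles and one must also control the $E$-dependence after clearing denominators, cf.\ Lemmas~\ref{l3.3} and~\ref{l3.5}) in a unified way, whereas your factorization-at-critical-points argument would need nontrivial modification near the poles of $v$ and to keep the estimate uniform as $E\to\infty$. One small presentational point: your $\xi_j(E)=(E-v_0(c_j))/u_j(x)$ still depends on $x$, so the reduction to $|(x-c_j)^{m_j}-\xi_j|<\delta/c_1$ is not literally a monomial inequality; the clean fix is to change variables $s=\phi(t)=t\,|u_j(c_j+t)|^{1/m_j}$ (a bi-Lipschitz bijection since $u_j$ is bounded away from $0$), after which $t^{m_j}u_j=\pm\phi(t)^{m_j}$ and the monomial estimate applies verbatim. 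With that repair the argument is complete.
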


Using the Cartan estimate in Section 2, we can prove
Lojasiewicz type inequality
for meromorphic functions.

\begin{lem}\label{l3.2}
Let $v=\frac{g}{f}$, $f,g$ are real analytic on $\mathbb{T}$, $f,v$ are nonconstant,
$Z(f)=\{x\in\mathbb{T}:f(x)=0\}\neq\emptyset$.
There is a constant $c_0=c_0(v)>0$ such that
\begin{equation*}
 \mathrm{mes}\{x\in\mathbb{T}:|v(x)-E|<\epsilon\}<\epsilon^{c_0}
\end{equation*}
for all $E\in\mathbb{R}$ and sufficiently small $\epsilon>0$.
\end{lem}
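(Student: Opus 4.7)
The plan is to reduce the meromorphic problem to the analytic case by setting $H_E(z) := g(z) - E f(z)$ and then applying the analytic specialization (the $n'=0$ case) of Theorem~\ref{t2.2} to $H_E$. Since $|f(x)| \le \|f\|_\infty$ on $\mathbb{T}$, one has the inclusion
\[
\{x \in \mathbb{T}: |v(x)-E|<\epsilon\} \subset \{x \in \mathbb{T}: |H_E(x)|<\epsilon\|f\|_\infty\},
\]
so it suffices to prove a Lojasiewicz-type bound for the family $\{H_E\}_{E\in\mathbb{R}}$ of real analytic functions, uniformly in $E$.

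First, I would extend $f$ and $g$ holomorphically to a strip $\{|\mathrm{Im}\,z|<\rho\}$ on which $|f|,|g| \le B_0$, so that $|H_E| \le B_0(1+|E|)$ throughout; then cover $\mathbb{T}$ by finitely many complex disks $D_1,\dots,D_J$ of radius $R \le \rho/8$ contained in that strip. The crucial preparatory step is the matching uniform lower bound
\[
\max_{z \in D_j} |H_E(z)| \ge c_*(1+|E|), \qquad c_* = c_*(v,R)>0.
\]
I would obtain this by a compactness argument: the map $(\alpha,\beta) \mapsto \max_{D_j}|\alpha g - \beta f|$ is continuous on the unit circle $\{\alpha^2+\beta^2=1\}$ and strictly positive, because $\alpha g \equiv \beta f$ on $D_j$ would (by analytic continuation) force $v=g/f$ to be constant on $\mathbb{T}$, contradicting the hypothesis. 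Specializing to $(\alpha,\beta)=(1,E)/\sqrt{1+E^2}$ then yields the claim, and I pick $z_j^E \in D_j$ attaining (half of) the stated maximum.

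Next, I would apply Theorem~\ref{t2.2} with $n'=0$ to the normalized function $\tilde H_E(z) := H_E(z_j^E + z)/H_E(z_j^E)$ on the disk $\{|z|\le R\}$: the normalization gives $|\tilde H_E(0)|=1$, the maximum modulus is bounded by $2B_0/c_*$ uniformly in $E$, and by Jensen's formula the number $n$ of zeros in a concentric disk of radius $R_2<R$ is bounded by some $N=N(v,R)$. The theorem then yields, outside a system of disks of total radius $\lesssim NH$, a lower bound of the shape $|\tilde H_E(z)| \ge C_0 H^N$. Un-normalizing gives $|H_E(x)| \gtrsim (1+|E|)H^N$ for real $x$ outside an exceptional set of one-dimensional measure $\lesssim NH$. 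Choosing $H \sim \epsilon^{1/N}$ makes the right-hand side exceed $\epsilon\|f\|_\infty$, so the exceptional real set has measure $\lesssim \epsilon^{1/N}$ in each $D_j\cap\mathbb{R}$. Summing over the finitely many $j$ and setting $c_0 := 1/(2N)$ then delivers $\mathrm{mes}\{x\in\mathbb{T}: |v(x)-E|<\epsilon\} < \epsilon^{c_0}$ for $\epsilon$ sufficiently small.

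The hard part will be the uniformity in $E\in\mathbb{R}$: applying the standard analytic Lojasiewicz lemma to each $H_E$ separately produces constants that could a~priori degrade as $|E|\to\infty$ or as $E$ approaches a critical value of $v$. The compactness argument above resolves both issues at once, because the $(1+|E|)$-scaling of the upper bound $|H_E|\le B_0(1+|E|)$ is matched exactly by the lower bound $\max_{D_j}|H_E|\ge c_*(1+|E|)$; consequently the normalized function $\tilde H_E$ has maximum modulus and zero count bounded independently of $E$, and every parameter entering Theorem~\ref{t2.2} is controlled uniformly.
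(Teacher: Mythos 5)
Your argument is correct, but it takes a genuinely different route from the paper's. The paper proves Lemma~\ref{l3.2} by applying the meromorphic Cartan estimate (Theorem~\ref{t2.2}, with the pole-disk machinery) to the derivative $v'$, bounding $\mathrm{mes}\{|v'|\le\epsilon^{1/2}\}$ by $C\epsilon^{c}$, and then handling $\{|v-E|<\epsilon,\ |v'|>\epsilon^{1/2}\}$ by an elementary sublevel-set/monotonicity argument. You instead clear the denominator, work with the \emph{analytic} family $H_E=g-Ef$, and use a compactness argument on the projective circle $\{\alpha^2+\beta^2=1\}$ to get matching upper and lower bounds $\sim(1+|E|)$, so that after normalization only the $n'=0$ (i.e.\ classical analytic) case of Theorem~\ref{t2.2} is invoked. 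This is more work than the paper's one-line derivative trick, but it buys you something: the $(1+|E|)$-normalization built into your lower bound is exactly the content of Lemma~\ref{l3.5}, so your argument establishes Lemmas~\ref{l3.2}, \ref{l3.3} and \ref{l3.5} in one pass and does not need the imported uniform Lojasiewicz result (Lemma~\ref{l3.4}, from Duarte--Klein) at all; by contrast, the paper proves \ref{l3.2} first, derives \ref{l3.3} as a corollary, and then needs \ref{l3.4} to get the uniformity in $E$ for \ref{l3.5}. One small bookkeeping point: in Theorem~\ref{t2.2} the zero count $n$ refers to zeros in the \emph{outer} disk $\{|z|<R_1\}$, so Jensen must be run on a disk strictly larger than $R_1$; you therefore need three nested radii (strip, Cartan outer, Cartan inner), not the two you quote. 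This is routine to arrange with $R\le\rho/8$ and does not affect the validity of the argument.
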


\begin{proof}
  Consider a covering $[0,1]\subset\cup_{l=1}^{m}(p_l-r_l,p_l+r_l)$,
where $0<|v'(p_l)|<\infty $. By Theorem \ref{t2.2},
$\{x\in(p_l-r_l,p_l+r_l):|v'(x)|\leq\epsilon^{\frac{1}{2}}\}$
is contained in a union of at most $C$ intervals of total measure at most
$C\epsilon^{c}$, where $C=C(v),c=c(v)$.
Hence
\begin{equation*}
  \mathrm{mes}\{x\in\mathbb{T}:|v(x)-E|<\epsilon\}\leq
\mathrm{mes}\{x\in\mathbb{T}:|v'(x)|\leq\epsilon^{\frac{1}{2}}\}
\end{equation*}
\begin{equation*}
+\mathrm{mes}\{x\in\mathbb{T}:|v(x)-E|<\epsilon,|v'(x)|>\epsilon^{\frac{1}{2}}\}
\leq C\epsilon^{c}+C\epsilon^{\frac{1}{2}}<\epsilon^{c_0}.
\end{equation*}
\end{proof}

By Lemma \ref{l3.1}, Lemma \ref{l3.2}, we have
\begin{lem}\label{l3.3}
Let $v ,  f,g$ as in Lemma \ref{l3.2}, there is a constant $c=c(f,g)>0$ such that
\begin{equation*}
 \mathrm{mes}\{x\in\mathbb{T}:|g(x)-Ef(x)|<\epsilon\}<\epsilon^{c}
\end{equation*}
for all $E\in\mathbb{R}$ and sufficiently small $\epsilon>0$.
\end{lem}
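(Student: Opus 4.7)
The plan is to decompose the set $\{x \in \mathbb{T} : |g(x) - Ef(x)| < \epsilon\}$ according to whether $|f(x)|$ is small or not, and then apply the two previously established Lojasiewicz inequalities on each piece.

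More precisely, I would first pick a threshold $\delta = \epsilon^{1/2}$ and split
\begin{equation*}
\{x : |g(x) - Ef(x)| < \epsilon\} \subset A \cup B,
\end{equation*}
where $A = \{x \in \mathbb{T} : |f(x)| < \epsilon^{1/2}\}$ and $B = \{x \in \mathbb{T} : |f(x)| \geq \epsilon^{1/2},\ |g(x) - Ef(x)| < \epsilon\}$. The set $A$ is handled directly by Lemma \ref{l3.1} applied to the non-constant real analytic function $f$ with $E = 0$: this yields $\mathrm{mes}(A) < (\epsilon^{1/2})^{c_1}$ for some $c_1 = c_1(f) > 0$ and $\epsilon$ small enough. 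For the set $B$, I would divide the inequality $|g(x) - Ef(x)| < \epsilon$ by $|f(x)| \geq \epsilon^{1/2}$ to conclude $|v(x) - E| = |g(x)/f(x) - E| < \epsilon^{1/2}$ on $B$; then Lemma \ref{l3.2} gives $\mathrm{mes}(B) < (\epsilon^{1/2})^{c_0}$ for some $c_0 = c_0(v) > 0$.

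Combining the two bounds,
\begin{equation*}
\mathrm{mes}\{x \in \mathbb{T} : |g(x) - Ef(x)| < \epsilon\} \leq \epsilon^{c_1/2} + \epsilon^{c_0/2} < \epsilon^{c}
\end{equation*}
for any $0 < c < \tfrac{1}{2}\min(c_0, c_1)$ once $\epsilon$ is small enough, and this $c$ depends only on $f$ and $g$ (through $v$).

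There is no serious obstacle here, since all the analytic work has been absorbed into Lemmas \ref{l3.1} and \ref{l3.2}. The only mild point is to check that the hypotheses of Lemma \ref{l3.1} are satisfied by $f$ itself (this uses that $f$ is non-constant real analytic on $\mathbb{T}$, which is part of the assumption) and that the hypotheses of Lemma \ref{l3.2} are satisfied by $v$ (also explicit in the statement). The threshold $\epsilon^{1/2}$ is arbitrary; any $\epsilon^{\alpha}$ with $0 < \alpha < 1$ works and only affects the constant $c$.
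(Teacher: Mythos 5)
Your proof is correct and is essentially the same as the paper's: both split on the threshold $|f(x)| \gtrless \epsilon^{1/2}$, bound the small-$f$ piece via the Lojasiewicz inequality for the analytic function $f$ (Lemma \ref{l3.1}), and bound the remaining piece by dividing through by $f$ and invoking Lemma \ref{l3.2}. You just spell out the bookkeeping slightly more explicitly than the paper does.
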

\begin{proof}
\begin{equation*}
  \mathrm{mes}\{x\in\mathbb{T}:|g(x)-Ef(x)|<\epsilon\}\leq
\mathrm{mes}\{x\in\mathbb{T}:|g(x)-Ef(x)|<\epsilon,|f(x)|\geq\epsilon^{\frac{1}{2}}\}
\end{equation*}
\begin{equation*}
  +\mathrm{mes}\{x\in\mathbb{T}:|f(x)|<\epsilon^{\frac{1}{2}}\}\leq
\mathrm{mes}\{x\in\mathbb{T}:|v(x)-E|<\epsilon^{\frac{1}{2}}\}+\epsilon^{c_0}<\epsilon^{c}.
\end{equation*}
\end{proof}

We introduce the uniform Lojasiewicz   inequality.

\begin{lem}[Lemma 6.1 in \cite{DK}]\label{l3.4}
We denote by $C_{r}^{\omega}(\mathbb{T},\mathbb{R})$
the Banach space of real analytic functions with continuous extension to $A_{r}=\{z\in\mathbb{C}:1-r<|z|<1+r\}$
and norm $\|f\|_{r}=\sup\limits_{z\in A_{r}}|f(z)|$.
Let $0\neq f\in C_{r}^{\omega}(\mathbb{T},\mathbb{R}) $.
Then there are constants $\delta=\delta(f)>0, S=S(f)>0, b=b(f)>0$
such that if $g\in C_{r}^{\omega}(\mathbb{T},\mathbb{R}) $ with
$\|g-f\|_{r}<\delta$, then
\begin{equation*}
 \mathrm{mes}\{x\in\mathbb{T}:|g(x)|<t\}<St^{b}, \forall t>0.
\end{equation*}
\end{lem}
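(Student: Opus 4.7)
The plan is to reduce Lemma 3.4 to an application of Theorem 2.2 (specialized to the analytic case $n' = 0$) on each piece of a finite cover of $\mathbb{T}$, ensuring that every quantity entering the Cartan estimate---the anchor value, the maximum modulus, and the zero count---is controlled uniformly for $g$ with $\|g - f\|_r < \delta$.

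I would begin by fixing a finite cover $\mathbb{T} \subset \bigcup_{k=1}^m I_k$ by arcs of length $\ll r$. Since $f$ is analytic on $A_r$ and $f \not\equiv 0$, its zero set in the closed subannulus $\overline{A_{r/2}}$ is finite. For each $k$, I would choose an anchor point $p_k \in A_{r/2}$ close to $I_k$ with $|f(p_k)| \geq c_f > 0$, and a Jordan curve $\gamma_k \subset A_{r/2}$ enclosing an open neighborhood $D_k$ of $I_k$; by slightly perturbing the radius, $\gamma_k$ can be chosen to satisfy $|f| \geq c_f' > 0$ on $\gamma_k$. Let $N_k$ be the number of zeros of $f$ inside $\gamma_k$, counted with multiplicity, and set $N = \max_k N_k$. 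Both $N$ and the constants $c_f, c_f'$ depend only on $f$.

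For $g$ with $\|g - f\|_r < \delta$, the Cauchy estimate gives $\|g - f\|_{\overline{A_{r/2}}} \leq C(r)\,\delta$. Taking $\delta$ less than $\min(c_f, c_f')/(2C(r))$ yields $|g(p_k)| \geq c_f/2$, $\|g\|_{\overline{A_{r/2}}} \leq \|f\|_r + 1$, and $|g - f| < |f|$ on $\gamma_k$. By Rouch\'e, $g$ has exactly $N_k$ zeros inside $\gamma_k$. I then apply Theorem 2.2 to the normalization $\tilde g(\zeta) = g(p_k + \zeta)/g(p_k)$ on a disk of radius $R$ whose image $D(p_k, R)$ contains $D_k$ and lies inside $\gamma_k$, with $n' = 0$. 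For every $H \in (0,1)$, outside at most $N_k$ disks of radius $H$ one obtains
\begin{equation*}
\log |g(z)| \geq - C_1 \log \|f\|_r - N \log(C_2/H) - C_3,
\end{equation*}
where $C_1, C_2, C_3$ depend only on $f$. Choosing $H = t^{1/(2N)}$, the right-hand side exceeds $\log t$ once $t$ is below an $f$-dependent threshold, so the set $\{x \in I_k : |g(x)| < t\}$ is contained in the real slice of the exceptional disks, whose Lebesgue measure is $\lesssim N H \lesssim t^{1/(2N)}$. Summing over $k$ produces the desired bound with $b = 1/(2N)$ and $S = S(f)$.

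The main obstacle is securing the uniform zero count via Rouch\'e. This forces the contours $\gamma_k$ to be separated from the zero set of $f$ by a definite margin, so that the minimum modulus $c_f'$ exceeds the perturbation $\|g - f\|_r$; once such curves are produced by an elementary pigeonhole on the radius of $\gamma_k$ (using the discreteness of the zero set of $f$ in $\overline{A_{r/2}}$), the remainder of the argument is routine bookkeeping of $f$-dependent constants. A secondary point worth noting is that the exponent $b$ arises as $1/(2N)$ where $N$ counts zeros of $f$ in a full annular neighborhood of $\mathbb{T}$, not just on $\mathbb{T}$, so one does pay for complex zeros near the real axis in the final exponent.
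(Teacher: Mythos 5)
The paper does not prove Lemma \ref{l3.4}; it is quoted verbatim as Lemma 6.1 of Duarte--Klein \cite{DK}, so there is no internal proof to compare against. Your argument, however, is essentially correct and gives a clean self-contained proof of the uniform Lojasiewicz inequality from Theorem \ref{t2.2} specialized to $n'=0$: Rouch\'e on the contours $\gamma_k$ locks the zero count of $g$ to that of $f$, the anchor point $p_k$ and the lower bound $|g(p_k)|\geq c_f/2$ supply the normalization $|\tilde g(0)|=1$ that Cartan requires, and the choice $H=t^{1/(2N)}$ converts the pointwise lower bound into the sublevel-set measure bound with $b=1/(2N)$.

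Two small remarks, neither of which affects correctness. First, the reference to a ``Cauchy estimate'' when passing from $A_r$ to $\overline{A_{r/2}}$ is superfluous: the sup norm on the smaller annulus is trivially dominated by the sup norm on the larger one, and a Cauchy estimate would only be needed if you wanted derivative bounds. Second, in Theorem \ref{t2.2} the exceptional-disk conclusion holds on the \emph{inner} disk $\{|z|\leq R_2\}$, so the correct requirement is $D_k\subset D(p_k,R_2)$ with $R_2<R$ and $D(p_k,R)\subset A_r$ (and $R$ chosen so $g$ has no zeros on $\partial D(p_k,R)$, which a generic choice of $R$ guarantees); your phrasing, which asks only that $D(p_k,R)\supset D_k$, conflates $R$ and $R_2$. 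Once these are tidied, the bookkeeping of $f$-dependent constants closes as you describe, and the large-$t$ regime is handled simply by enlarging $S$ so that $St_0^b\geq 1$.
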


Now, we can prove the following lemma.
\begin{lem}\label{l3.5}
Let $   f,g$ as in Lemma \ref{l3.2}, there is a constant $c=c(f,g)>0$ such that
\begin{equation*}
 \mathrm{mes}\left\{x\in\mathbb{T}:\frac{1}{\sqrt{1+E^2}}\big|g(x)-Ef(x)\big|<\epsilon\right\}<\epsilon^{c}
\end{equation*}
for all $E\in\mathbb{R}$ and sufficiently small $\epsilon>0$.
\end{lem}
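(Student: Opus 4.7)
The plan is to recast the statement as a uniform Lojasiewicz inequality for a compact family of real-analytic functions and then apply Lemma \ref{l3.4}. Introduce the parametrization $\alpha=1/\sqrt{1+E^{2}}$, $\beta=E/\sqrt{1+E^{2}}$, so that $\alpha^{2}+\beta^{2}=1$, and set $h_{\alpha,\beta}(x)=\alpha g(x)-\beta f(x)$. The quantity to estimate becomes $|h_{\alpha,\beta}(x)|<\epsilon$. As $E$ runs over $\mathbb{R}$, the pair $(\alpha,\beta)$ traces out the open right semicircle $\{\alpha>0,\ \alpha^{2}+\beta^{2}=1\}$; its closure $K$, which adjoins the endpoints $(0,\pm 1)$ corresponding to the limits $E\to\pm\infty$, is compact.

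First I verify that $h_{\alpha,\beta}\not\equiv 0$ for every $(\alpha,\beta)\in K$. At the endpoints, $h_{0,\pm 1}=\mp f$, which is not identically zero since $f$ is nonconstant. For $\alpha>0$, an identity $\alpha g\equiv\beta f$ would force $v=g/f\equiv\beta/\alpha$, contradicting the nonconstancy of $v$.

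Next I invoke Lemma \ref{l3.4} at each point of $K$. For every $(\alpha_{0},\beta_{0})\in K$ there exist $\delta_{0},S_{0},b_{0}>0$ depending on $(\alpha_{0},\beta_{0})$ such that $\mathrm{mes}\{x\in\mathbb{T}:|h(x)|<t\}<S_{0}t^{b_{0}}$ holds for every $h\in C_{r}^{\omega}(\mathbb{T},\mathbb{R})$ with $\|h-h_{\alpha_{0},\beta_{0}}\|_{r}<\delta_{0}$. Since $(\alpha,\beta)\mapsto h_{\alpha,\beta}$ is linear and hence continuous from $K$ into $C_{r}^{\omega}(\mathbb{T},\mathbb{R})$, the sets $\{(\alpha,\beta)\in K:\|h_{\alpha,\beta}-h_{\alpha_{0},\beta_{0}}\|_{r}<\delta_{0}\}$ form an open cover of $K$. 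Compactness extracts a finite subcover, yielding uniform constants $S,b>0$ such that $\mathrm{mes}\{x\in\mathbb{T}:|h_{\alpha,\beta}(x)|<t\}<St^{b}$ for all $(\alpha,\beta)\in K$ and all $t>0$. Setting $t=\epsilon$ and absorbing $S$ into a slightly smaller exponent $c<b$ for small $\epsilon$ gives the claim.

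The main obstacle, and the reason the normalization factor $\sqrt{1+E^{2}}$ is introduced, is the noncompactness of $E\in\mathbb{R}$. Without the normalization, $\|g-Ef\|_{r}$ blows up as $|E|\to\infty$ and Lemma \ref{l3.4} cannot be applied uniformly; normalizing places the family on a compact arc whose limiting functions $\pm f$ remain nonzero analytic, which is precisely what makes the covering argument work and produces constants independent of $E$.
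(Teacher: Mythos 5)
Your proof is correct, and it takes a route that is organized differently from the paper's. The paper splits the argument into two regimes: for $|E|\geq C_1$ it notes that the normalized function $\pm\bigl(\tfrac{1}{\sqrt{1+E^2}}g-\tfrac{E}{\sqrt{1+E^2}}f\bigr)$ lies in a $\delta(f)$-ball around the single reference function $f$ and applies Lemma \ref{l3.4} once there, while for $|E|<C_1$ it discards the (then harmless) normalization and falls back on Lemma \ref{l3.3}, which in turn rests on Lemma \ref{l3.2} and the Cartan estimate for meromorphic functions from Section~2. You instead cover the entire compact semicircle $K=\{\alpha^2+\beta^2=1,\ \alpha\geq 0\}$ at once, invoking Lemma \ref{l3.4} at each basepoint $h_{\alpha_0,\beta_0}$ and using compactness of $K$ plus continuity of $(\alpha,\beta)\mapsto h_{\alpha,\beta}$ in $C^\omega_r$ to extract finitely many balls and hence uniform $S,b$; the verification that $h_{\alpha,\beta}\not\equiv 0$ at interior points ($v$ nonconstant) and at the endpoints ($f\not\equiv 0$) is exactly what makes every local application of Lemma \ref{l3.4} legitimate. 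The upshot is that your argument proves Lemma \ref{l3.5} using only the uniform Lojasiewicz inequality of \cite{DK}, bypassing Lemmas \ref{l3.2} and \ref{l3.3} entirely for this step; the paper's route is more piecemeal but makes direct use of the Cartan machinery it sets up in Section~2. Both are sound; yours is the more unified statement of the compactification idea, while the paper's keeps the two ``dangerous'' limits $E\to\pm\infty$ isolated and explicit.
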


\begin{proof}
  If $E>0$, consider
\begin{equation}\label{3.1}
s_1=\left\|-\frac{1}{\sqrt{1+E^2}}g+\frac{E}{\sqrt{1+E^2}}f-f\right\|_{r}\leq\frac{\|f\|_{r}+\|g\|_{r}}{\sqrt{1+E^2}}.
\end{equation}
Since $\forall E\geq C_1=C_1(f,g)>0, s_1<\delta(f)$, by Lemma \ref{l3.4},
\begin{equation}\label{3.2}
 \mathrm{mes}\left\{x\in\mathbb{T}:\frac{1}{\sqrt{1+E^2}}\big|g(x)-Ef(x)\big|<\epsilon\right\}<S\epsilon^{b}, \forall E\geq C_1 .
\end{equation}
 If $E<0$, consider
\begin{equation}\label{3.3}
s_2=\left\| \frac{1}{\sqrt{1+E^2}}g-\frac{E}{\sqrt{1+E^2}}f-f\right\|_{r}\leq\frac{\|f\|_{r}+\|g\|_{r}}{\sqrt{1+E^2}}.
\end{equation}
Since $\forall E\leq -C_1 , s_2<\delta(f)$, by Lemma \ref{l3.4},
\begin{equation}\label{3.4}
 \mathrm{mes}\left\{x\in\mathbb{T}:\frac{1}{\sqrt{1+E^2}}\big|g(x)-Ef(x)\big|<\epsilon\right\}<S\epsilon^{b}, \forall E\leq -C_1 .
\end{equation}
This proves Lemma \ref{l3.5} for $|E|\geq C_1$.
If $|E|\leq C_1$, Lemma \ref{l3.5} follows from Lemma \ref{l3.3}.
\end{proof}

\section{Green's function estimates}

In this section, we will prove Green's function estimates
for
\begin{equation}\label{a1}
G_{N}(x,E)=(R_{[0,N)}(H(x)-E)R_{[0,N)})^{-1},
\end{equation}
where
\begin{equation}\label{a2}
H (x)=v (x+n\omega)\delta_{nn'}+\epsilon S_\phi ,
\end{equation}
with the potential $v=\frac{g}{f}$, $f,g$ are real analytic on $\mathbb{T}$, $f,v$ are nonconstant,
$Z(f)=\{x\in\mathbb{T}:f(x)=0\}\neq\emptyset$ and
$\phi$  real analytic satisfying
\begin{equation}\label{a3}
  |\hat{\phi}(n)|<e^{-\rho|n|},\quad \forall n \in \mathbb{Z}
\end{equation}
for some $\rho>0$.
Without loss, we assume $\hat{\phi}(0)=0$ and $\|f\|_{\infty}\leq 1$.

We will follow the method in \cite{B05}, but as mentioned in Section 1, the operator $H$ is unbounded and the energy $E$ is unbounded.
Write
\begin{equation}\label{a4}
H_{N}(x)-E=F_{N}(x,E)B_{N}(x,E),
\end{equation}
where
\begin{equation}\label{a5}
F_{N}(x,E)(n,n^{\prime})=\frac{\sqrt{1+E^2}}{f(x+n\omega)}\delta_{nn'},
\end{equation}
\begin{equation}\label{a6}
B_{N}(x,E)(n,n)=\frac{1}{\sqrt{1+E^2}}(g(x+n\omega)-Ef(x+n\omega)),
\end{equation}
\begin{equation}\label{a7}
B_{N}(x,E)(n,n^{\prime})=\frac{\epsilon}{\sqrt{1+E^2}}f(x+n\omega)\hat{\phi}(n-n'), n\neq n' .
\end{equation}
Then
\begin{equation}\label{a8}
G_{N}(x,E)=B_{N}(x,E)^{-1}F_{N}(x,E)^{-1}.
\end{equation}
We will prove   estimates
for
\begin{equation}\label{a9}
|B_{N}(x,E)^{-1}(n,n^{\prime})|=\frac{|\det B_{n,n'}(x,E)|}{|\det B_{N}(x,E)|}, \quad n,n'\in [0,N),
\end{equation}
where $B_{n,n'}(x,E)$ refers to the ($n,n'$)-minor of $B_{N}(x,E)$.

\begin{lem}\label{l4.1}
Under the assumptions above, for $\omega \in DC $, if
$0<\epsilon<\epsilon_{0}$,
\begin{equation*}
  \frac{1}{N}\int_{\mathbb{T}}\log|\det B_{N}(x,E)|dx>\int_{\mathbb{T}}\log \frac{|g(x)-Ef(x)|}{\sqrt{1+E^2}}dx-(\kappa+\epsilon_{0}^{\frac{1}{4}}),
\end{equation*}
where $\epsilon_{0}=\epsilon_{0}(\phi,f,g)>0,\kappa=\kappa(\phi,f,g)>0$.
\end{lem}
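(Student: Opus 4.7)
The plan is to separate the diagonal from the off-diagonal part of $B_N(x,E)$ and handle the correction perturbatively. Write $B_N(x,E)=D_N(x,E)+\epsilon T_N(x,E)$, where $D_N$ is diagonal with entries $v_E(x+n\omega):=(g(x+n\omega)-Ef(x+n\omega))/\sqrt{1+E^2}$ and $T_N$ collects the off-diagonal terms. Since $v_E$ is a nontrivial real analytic function on $\mathbb{T}$, $\det D_N\neq 0$ for a.e.\ $x$, and we may factor $\det B_N=\det D_N\cdot\det(I+\epsilon D_N^{-1}T_N)$. Translation invariance on $\mathbb{T}$ yields the exact identity
\[ \frac{1}{N}\int_{\mathbb{T}}\log|\det D_N(x,E)|\,dx=\int_{\mathbb{T}}\log|v_E(x)|\,dx, \]
with integrability at the zeros of $v_E$ provided by the Lojasiewicz-type Lemma \ref{l3.5}. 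The claim therefore reduces to
\[ \frac{1}{N}\int_{\mathbb{T}}\log|\det(I+\epsilon D_N^{-1}T_N)(x,E)|\,dx\geq-(\kappa+\epsilon_0^{1/4}). \]

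For this I would introduce the good set $\mathcal{G}:=\{x\in\mathbb{T}:\min_{0\leq n<N}|v_E(x+n\omega)|\geq\epsilon^{1/4}\}$. On $\mathcal{G}$ we have $\|D_N^{-1}\|_{\mathrm{op}}\leq\epsilon^{-1/4}$, while the exponential decay (\ref{a3}) combined with $\|f\|_{\infty}\leq 1$ and $\sqrt{1+E^2}\geq 1$ gives the uniform bound $\|T_N\|_{\mathrm{op}}\leq\sum_{k}|\hat\phi(k)|=:C_0$, independent of $N$ and $E$. Thus $\|\epsilon D_N^{-1}T_N\|_{\mathrm{op}}\leq C_0\epsilon^{3/4}\leq\frac{1}{2}$ once $\epsilon<\epsilon_0$ is chosen small in terms of $C_0$; the elementary estimate $\log|1+\lambda|\geq-2|\lambda|$ valid for $|\lambda|\leq\frac{1}{2}$, applied eigenvalue by eigenvalue to $\epsilon D_N^{-1}T_N$, yields $\log|\det(I+\epsilon D_N^{-1}T_N)|\geq-2C_0 N\epsilon^{3/4}$ on $\mathcal{G}$, contributing at most $2C_0\epsilon^{3/4}\leq\epsilon_0^{1/4}$ to the averaged lower bound.

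The hard part is the contribution from $\mathcal{G}^c$, where $\|D_N^{-1}\|$ may be arbitrarily large and the perturbative expansion breaks down. Lemma \ref{l3.5} and a union bound only give $|\mathcal{G}^c|\leq N\epsilon^{c/4}$, not small in $N$, and on this set the integrand can be very negative. My plan is to use that the bad index set $\mathcal{B}(x):=\{n\in[0,N):|v_E(x+n\omega)|<\epsilon^{1/4}\}$ is sparse, $|\mathcal{B}(x)|\ll N$, for most $x$, thanks to the Diophantine condition on $\omega$ combined with Lemma \ref{l3.5}, and then to perform a Schur complement reduction of $B_N(x,E)$ against the complementary good block $[0,N)\setminus\mathcal{B}(x)$. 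This factors $\det B_N(x,E)$ as a determinant of a diagonally dominant block (easily controlled by $\prod_{n\notin\mathcal{B}(x)}v_E(x+n\omega)$) times $\det S(x,E)$, where $S$ is the Schur complement, a matrix of size $|\mathcal{B}(x)|$ whose entries are meromorphic in $x$ and built from $f,g,\hat\phi$ along the $\omega$-orbit. Applying Theorem \ref{t2.2} (the Cartan estimate for meromorphic functions) to $\det S$ produces pointwise lower bounds outside a further exceptional set of small measure, and integrating against the good-block contribution absorbs the discrepancy into $\kappa(\phi,f,g)$. Making this Cartan/Schur argument uniform in $N$ via the Diophantine condition is the main technical obstacle.
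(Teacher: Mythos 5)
Your reduction to bounding $\frac{1}{N}\int_{\mathbb T}\log|\det(I+\epsilon D_N^{-1}T_N)|\,dx$ from below is correct, as is the translation-invariance identity for $\int\log|\det D_N|$, and the good-set computation on $\mathcal G$ is fine. But the argument stops exactly where the lemma's difficulty lives: you acknowledge that the contribution from $\mathcal G^c$ (measure up to $N\epsilon^{c/4}$, so not small in $N$, and with $\log|\det(I+\epsilon D_N^{-1}T_N)|$ potentially very negative there) is the ``main technical obstacle'' and leave the Schur/Cartan resolution as a sketch. This is a genuine gap, not a routine detail. Two concrete problems with the sketch: (i) the Schur complement relative to $[0,N)\setminus\mathcal B(x)$ requires inverting the good block, which is not diagonal (it carries $\epsilon T_N$), so the complement's entries are not simply built from $v_E$ along the orbit and their meromorphic structure (number of poles, radii in Theorem~\ref{t2.2}) depends on $N$; (ii) the Cartan bound of Theorem~\ref{t2.2} degrades linearly in the number of poles $n'$, so without a quantitative sparsity bound $|\mathcal B(x)|\ll N$ that is uniform enough to survive integration, the estimate is not uniform in $N$ — and that sparsity bound is itself not proved here.

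The paper avoids this entirely by not working on the real axis at all in the perturbative step. Fix $\alpha<\rho_1/4$ and pick $y_0\in(\alpha/2,\alpha)$ so that $\inf_{x\in\mathbb R}|g(x\pm iy_0)-Ef(x\pm iy_0)|/\sqrt{1+E^2}\geq\eta_0>0$ uniformly in $E$ (this is \eqref{a10}: on that horizontal line there are no zeros of $g-Ef$, so the diagonal never degenerates and there is no bad set). Then $B_N(x+iy_0,E)=(I+\epsilon SD^{-1})D$ with $\|D^{-1}\|\leq\eta_0^{-1}$ everywhere, the Neumann-series estimate \eqref{a18} is valid with no exceptional set, and $\frac1N\log|\det D|$ is controlled by Denjoy--Koksma as in \eqref{a17}. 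Finally, since $u(z)=\frac1N\log|\det B_N(z,E)|$ is subharmonic in the strip, the three-lines/harmonic-majorant inequality \eqref{a21} transfers the lower bound on $\{\mathrm{Im}\,z=y_0\}$ (together with the Hadamard upper bound \eqref{a12} at $y_1=\rho_1/2$) down to the real line; the constant $\kappa$ is exactly the price $C\alpha/\rho_1$ of this transfer. If you want to salvage a real-axis approach, you would need to supply the sparsity estimate for $\mathcal B(x)$, control the growth of the Schur complement's pole count, and show the exceptional disks from Cartan integrate to an $O(\kappa)$ loss uniformly in $N$; that is substantially harder than the off-axis route the paper takes.
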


\begin{proof}
Assume $f(z), g(z)$ are analytic in $\{z=x+iy\in\mathbb{C}:|y|\leq\rho_{1}\}$.
Fix $0<\alpha<\frac{\rho_{1}}{4}$, by analyticity, there is $\eta_0>0$ such that
\begin{equation}\label{a10}
\inf_{E\in\mathbb{R}}\sup_{\frac{\alpha}{2}<y_0<\alpha}\inf_{x\in\mathbb{R}}\frac{|g(x\pm iy_0)-Ef(x\pm iy_0)|}{\sqrt{1+E^2}}>\eta_0.
\end{equation}
Take $0<\epsilon<\epsilon_{0}<\eta_0^{2}$ .
For $z=x+iy,|y|\leq\frac{\rho_{1}}{2}$, by Hadamard inequality,
\begin{equation}\label{a11}
|\det B_{N}(z,E)|\leq\prod_{0\leq n<N}\left[\frac{|g(z+n \omega)-Ef(z+n \omega)|}{\sqrt{1+E^2}}+C\epsilon_{0}\right].
\end{equation}

By Denjoy-Koksma type inequality (Lemma 12 in \cite{J}),
\begin{equation}\label{a12}
\frac{1}{N} \log|\det B_{N}(z,E)|\leq \frac{1}{N}\sum_{n=0}^{N-1}
\log\left[\frac{|g(x+iy+n \omega)-Ef(x+iy+n \omega)|}{\sqrt{1+E^2}}+C\epsilon_{0}\right]
\end{equation}
\begin{equation*}
  \leq\int_{0}^{1}\log\left[\frac{|g(x+iy )-Ef(x+iy )|}{\sqrt{1+E^2}}+C\epsilon_{0}\right]dx+N^{-\delta_1}<C,
\end{equation*}
where $\delta_1=\delta_1(\omega)>0$.

Consider the diagonal matrix $D$
\begin{equation}\label{a13}
D_{nn}=\frac{ g(x+iy_{0}+n \omega )-Ef(x+iy_{0}+n \omega ) }{\sqrt{1+E^2}}
\end{equation}
and non-diagonal matrix $S$
\begin{equation}\label{a14}
S_{n,n'}=\frac{   f(x+iy_{0}+n \omega ) \hat{\phi}(n-n')}{\sqrt{1+E^2}}, n\neq n'.
\end{equation}

Since
\begin{equation}\label{a15}
B_{N}(x+iy_{0},E)=D+\epsilon S=(I+\epsilon SD^{-1})D,
\end{equation}
we have
\begin{equation}\label{a16}
\frac{1}{N} \log|\det B_{N}(x+iy_{0},E)|=\frac{1}{N} \log|\det (I+\epsilon SD^{-1})|+\frac{1}{N} \log|\det D|.
\end{equation}

By (\ref{a10}) and  Denjoy-Koksma type inequality, we have
\begin{equation}\label{a17}
\frac{1}{N} \log|\det D|=\frac{1}{N} \sum_{n=0}^{N-1}\log|D_{nn}|
=\frac{1}{N} \sum_{n=0}^{N-1}\log(|D_{nn}| +\epsilon_0)-\frac{1}{N} \sum_{n=0}^{N-1}\log\left( 1+\frac{\epsilon_0}{|D_{nn}|}\right)
\end{equation}
\begin{equation*}
  \geq\frac{1}{N} \sum_{n=0}^{N-1}\log\left[\frac{|g(x+iy_{0}+n \omega )-Ef(x+iy_{0}+n \omega ) | }{\sqrt{1+E^2}} +\epsilon_0\right]-\frac{\epsilon_0}{\eta_0}
\end{equation*}
\begin{equation*}
\geq\int_{0}^{1}\log\left[\frac{|g(x+iy_{0})-Ef(x+iy_{0}) | }{\sqrt{1+E^2}} +\epsilon_0\right]dx-N^{-\delta_1}-\frac{\epsilon_0}{\eta_0}
\end{equation*}
\begin{equation*}
>\int_{0}^{1}\log\frac{|g(x+iy_{0})-Ef(x+iy_{0}) | }{\sqrt{1+E^2}} dx-2\frac{\epsilon_0}{\eta_0}.
\end{equation*}
\begin{equation}\label{a18}
\frac{1}{N} \log|\det (I+\epsilon SD^{-1})|=-\frac{1}{N} \log|\det (I+\epsilon SD^{-1})^{-1}|
\end{equation}
\begin{equation*}
=-\frac{1}{N} \log\left|\det \left[I+\sum_{s\geq 1}(-\epsilon SD^{-1})^{s}\right]\right|
\geq-\frac{1}{N}\log\prod_{n=0}^{N-1}\left[1+\sum_{s\geq 1}\|(\epsilon SD^{-1})^{s}e_n\|\right]>-C\frac{\epsilon_0}{\eta_0}.
\end{equation*}

By (\ref{a16}), (\ref{a17}), (\ref{a18}),
\begin{equation}\label{a19}
\frac{1}{N} \log|\det B_{N}(x+iy_{0},E)|>\int_{0}^{1}\log\frac{|g(x+iy_{0})-Ef(x+iy_{0}) | }{\sqrt{1+E^2}} dx-C\frac{\epsilon_0}{\eta_0},
\forall x\in \mathbb{R}.
\end{equation}

Since
\begin{equation}\label{a20}
u(z)=\frac{1}{N} \log|\det B_{N}(z,E)|
\end{equation}
is subharmonic in $\{z=x+iy\in\mathbb{C}:|y|<\rho_{1}\}$, we have for $y_1=\frac{\rho_{1}}{2}$,
\begin{equation}\label{a21}
\int_{0}^{1}u(x+iy_0)dx\leq\frac{y_1-y_0}{y_1}\int_{0}^{1}u(x)dx+\frac{ y_0}{y_1}\int_{0}^{1}u(x+iy_1)dx.
\end{equation}

By (\ref{a12}), (\ref{a19}), (\ref{a21}),
\begin{equation}\label{a22}
  \int_{0}^{1}u(x)dx>\frac{y_1}{y_1-y_0}\int_{0}^{1}\log\frac{|g(x+iy_{0})-Ef(x+iy_{0}) | }{\sqrt{1+E^2}}dx
  -C(\frac{\alpha}{\rho_{1}}+\frac{\epsilon_0}{\eta_0}).
\end{equation}

By Lemma \ref{l3.5},
\begin{equation}\label{a23}
\inf_{E\in\mathbb{R}}\int_{0}^{1}\log\frac{|g(x)-Ef(x)|}{\sqrt{1+E^2}} dx>-C_{f,g}, C_{f,g}>0.
\end{equation}

By subharmonicity,
\begin{equation}\label{a24}
\int_{0}^{1}\log\frac{|g(x)-Ef(x)|}{\sqrt{1+E^2}} dx
\end{equation}
\begin{equation*}
\leq\frac{1}{2}\int_{0}^{1}\log\frac{|g(x+iy_{0})-Ef(x+iy_{0}) | }{\sqrt{1+E^2}}dx
+\frac{1}{2}\int_{0}^{1}\log\frac{|g(x-iy_{0})-Ef(x-iy_{0}) | }{\sqrt{1+E^2}}dx .
\end{equation*}

By (\ref{a23}), (\ref{a24}),
\begin{equation}\label{a25}
\int_{0}^{1}\log\frac{|g(x+iy_{0})-Ef(x+iy_{0}) | }{\sqrt{1+E^2}}dx>-C, C>0.
\end{equation}

Using (\ref{a22}), (\ref{a25}), we obtain
\begin{equation}\label{a26}
 \int_{0}^{1}u(x)dx>\int_{0}^{1}\log\frac{|g(x+iy_{0})-Ef(x+iy_{0}) | }{\sqrt{1+E^2}}dx
 -\frac{y_0}{y_1-y_0}C-C(\frac{\alpha}{\rho_{1}}+\frac{\epsilon_0}{\eta_0}).
\end{equation}

Replace $y_0$ by $-y_0$, using (\ref{a24}), (\ref{a26}), we have
\begin{equation*}
  \int_{0}^{1}u(x)dx>\int_{0}^{1}\log\frac{|g(x)-Ef(x)|}{\sqrt{1+E^2}} dx-C\left(\frac{\alpha}{\rho_{1}}+\epsilon_0^{\frac{1}{2}}\right).
\end{equation*}
This proves Lemma \ref{l4.1}.
\end{proof}

We also need the following large deviation theorem.
\begin{thm}[Theorem 2.3 in \cite{SY}]\label{t4.2}
Let $u:\mathbb{T}\rightarrow\mathbb{R}$ be periodic with bounded subharmonic extension $\tilde{u}$ to $|\mathrm{Im} z|\leq 1$.
Assume $\omega\in DC$ . Then
\begin{equation*}
\mathrm{mes}\left\{x\in\mathbb{T}:\left|\sum_{0\leq |m|<M}\frac{M-|m|}{M^{2}} u(x+m\omega)-\hat{u}(0)\right|>M^{-\sigma}\right\}
<e^{-cM^\sigma},\quad c>0
\end{equation*}
for some $\sigma=\sigma(\omega)>0$.
\end{thm}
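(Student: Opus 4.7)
The plan is to prove a Bourgain-Goldstein-type large deviation estimate via Fourier analysis on $\mathbb{T}$, using the subharmonicity of $\tilde u$ to control the Fourier coefficients and the Diophantine condition to avoid small denominators. The key observation is that the weights $(M-|m|)/M^{2}$ are the Fejer weights, so the ergodic sum is a convolution of $u$ against the orbit with the Fejer kernel evaluated at $\omega$-multiples, which is much better behaved than a plain Birkhoff sum.

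I would first Fourier expand $u$ on $\mathbb{T}$: since $\tilde u$ is bounded and subharmonic on the strip $\{|\mathrm{Im}\, z|\le 1\}$, its Riesz representation (harmonic part plus a logarithmic potential of a positive finite measure) yields the bound $|\hat u(k)|\lesssim |k|^{-1}$. A direct computation then gives
\begin{equation*}
\sum_{0\le|m|<M}\frac{M-|m|}{M^{2}}u(x+m\omega)-\hat u(0)=\sum_{k\ne 0}\hat u(k)\,K_{M}(k\omega)\,e^{2\pi ikx},
\end{equation*}
where $K_{M}(\theta)=M^{-2}|\sin(\pi M\theta)/\sin(\pi\theta)|^{2}$ satisfies $K_{M}(\theta)\le (2M\|\theta\|)^{-2}$. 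I then truncate the series at $|k|=K:=M^{\alpha}$ for a small $\alpha>0$ to be chosen. On the low-frequency block $1\le|k|\le K$, the Diophantine bound $\|k\omega\|\ge a|k|^{-A}$ yields the pointwise estimate
\begin{equation*}
\Bigl|\sum_{1\le|k|\le K}\hat u(k)K_{M}(k\omega)e^{2\pi ikx}\Bigr|\lesssim\sum_{1\le|k|\le K}\frac{|k|^{2A-1}}{M^{2}}\lesssim\frac{K^{2A}}{M^{2}},
\end{equation*}
which is $\lesssim M^{-\sigma}$ once $\alpha<1/A$. For the high-frequency tail $|k|>K$, an $L^{2}$ bound using $\sum_{|k|>K}|\hat u(k)|^{2}\lesssim K^{-1}$ combined with Chebyshev produces only a polynomial-in-$M$ measure estimate.

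To upgrade from polynomial to exponential concentration, I would invoke the fact that bounded subharmonic functions on a strip have bounded BMO norm on $\mathbb{T}$; the Fejer-averaged object inherits a BMO norm of order $M^{-\sigma}$, and John-Nirenberg then converts this BMO smallness into the desired $e^{-cM^{\sigma}}$ tail. Alternatively, one can run a Bourgain-style induction on scales, partitioning the candidate bad set into nearly disjoint $\omega$-translates and iterating the $L^{2}$ bound so that too many independent failures become incompatible with it. The main obstacle is precisely this polynomial-to-exponential upgrade: the Fourier-analytic step is essentially routine once one has the Diophantine condition and the $|k|^{-1}$ decay, but promoting the $L^{2}$ tail to an exponential one uses the full subharmonic structure on the strip rather than merely the pointwise data on $\mathbb{T}$, and this is the technically delicate heart of the argument.
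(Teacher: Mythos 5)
The paper does not actually prove Theorem~\ref{t4.2}; it is imported verbatim as Theorem~2.3 of~\cite{SY}, so there is no in-text proof to compare against. Judged on its own merits, your Fourier-analytic setup is correct and matches the standard framework: the identity
\begin{equation*}
\sum_{0\le|m|<M}\frac{M-|m|}{M^{2}}u(x+m\omega)-\hat u(0)=\sum_{k\ne 0}\hat u(k)\,K_{M}(k\omega)\,e^{2\pi ikx}
\end{equation*}
with the Fejer kernel $K_{M}$, the decay $|\hat u(k)|\lesssim|k|^{-1}$ from the Riesz representation of a bounded subharmonic function, and the split at $|k|=M^{\alpha}$ using the Diophantine condition on the low block and an $\ell^{2}$ bound on the tail, are all right. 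The genuine gap is exactly where you flag it, the upgrade from polynomial to exponential, and the specific mechanism you lean on does not work.

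The assertion that ``the Fejer-averaged object inherits a BMO norm of order $M^{-\sigma}$'' is false for general bounded subharmonic $u$. The averaged function $v(x)=\sum_{m}\frac{M-|m|}{M^{2}}u(x+m\omega)$ is again a bounded subharmonic function with $\|v\|_{\infty}\le\|u\|_{\infty}$, hence $\|v\|_{\mathrm{BMO}}\lesssim\|u\|_{\infty}=O(1)$ uniformly in $M$; there is no extra decay to be had at the level of the BMO norm. The reason is that the Diophantine gain $K_{M}(k\omega)\le(2M\|k\omega\|)^{-2}\lesssim|k|^{2A}/M^{2}$ is only a gain for $|k|\lesssim M^{1/A}$; beyond that the trivial bound $K_{M}\le 1$ is all one has, so the high-frequency Fourier data of $v$ (which is what the local oscillation in the BMO seminorm sees) are not damped. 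Only when $\hat u(k)$ decays faster than $|k|^{-1}$ (e.g.\ exponentially, i.e.\ $u$ analytic) does one get a small $L^{\infty}$ and hence small BMO norm for $v-\hat u(0)$; for merely subharmonic $u$ with the borderline $|k|^{-1}$ decay this is not available. Note also a scaling problem even if the claim were true: John--Nirenberg gives $\mathrm{mes}\{|f-\langle f\rangle|>\lambda\}\lesssim e^{-c\lambda/\|f\|_{\mathrm{BMO}}}$, and with $\lambda\sim M^{-\sigma}$ and $\|f\|_{\mathrm{BMO}}\sim M^{-\sigma}$ the exponent is $O(1)$, not $M^{\sigma}$; you would need $\|f\|_{\mathrm{BMO}}\lesssim M^{-2\sigma}$, which is even less plausible.

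What the Bourgain--Goldstein--Schlag machinery (and by extension~\cite{SY}) actually uses is the small $L^{1}$ (or $L^{2}$) norm of $v-\hat u(0)$ \emph{together with} the $O(1)$ bound on its subharmonic extension, fed into a dedicated ``small $L^{1}$ plus bounded subharmonic implies exponentially small superlevel sets'' lemma, rather than straight BMO/John--Nirenberg. In Bourgain's book this is carried out via the Riesz representation and a Cartan-type estimate for the logarithmic-potential part, and one only needs the threshold to be the square root of the $L^{1}$ smallness. Your second alternative, the ``Bourgain-style induction on scales,'' is closer in spirit to this argument, but as stated it is too vague to count as a proof: to make the iteration close, one must use the near-invariance of $v$ under the shift $x\mapsto x+\omega$ quantitatively (the almost-invariance error is $O(1/M)$, which is what couples the scale length to the threshold), and that step is not spelled out. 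So your write-up has the correct skeleton and correctly identifies the crux, but the decisive lemma at the crux is missing, and the mechanism you propose in its place (BMO smallness of $v$) does not hold.
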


Now we can prove Green's function estimates.
\begin{prop}\label{p4.3}
Under the assumptions of Lemma \ref{l4.1}, we have for $\omega \in DC $,
$0<\epsilon<\epsilon_{0}$,
 there is $\Omega=\Omega_{N}(E)\subset\mathbb{T}$ satisfying
\begin{equation*}
\mathrm{mes}\Omega<e^{- c N^{\sigma}}, \quad c,\sigma >0
\end{equation*}
  such that if $x\notin \Omega$, then for some $|m|<\sqrt{N}$,
we have the Green's function estimate
\begin{equation*}
|G_{[0,N)}(x+m\omega,E)(n,n')|<e^{-c_{0}(|n-n'|-(\kappa+\epsilon_{0}^{\delta })N)}, \quad n,n'\in [0,N),
\end{equation*}
where $c_{0}=c_{0}(\rho)>0, \kappa=\kappa(\phi,f,g)>0, \delta=\delta(f,g)>0$.
\end{prop}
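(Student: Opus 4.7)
The plan is to follow \cite{B05}, adapted to the singular setting, combining Cramer's rule with the large deviation theorem. From (\ref{a8}) together with $|F_N^{-1}(n,n)| = |f(x+n\omega)|/\sqrt{1+E^2} \leq 1$, it suffices to bound $|B_N^{-1}(n,n')|$, which by (\ref{a9}) equals $|\det B_{n,n'}(x,E)|/|\det B_N(x,E)|$. So the task splits into controlling the denominator from below and the numerator from above.

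For the denominator, $u(x) = \frac{1}{N}\log|\det B_N(x,E)|$ admits a bounded subharmonic extension to a strip via (\ref{a12}). Lemma \ref{l4.1} combined with (\ref{a23}) yields $\int_{\mathbb{T}} u\,dx > -C_{f,g} - \kappa - \epsilon_0^{1/4}$, so Theorem \ref{t4.2} gives
\[
\frac{1}{N}\log|\det B_N(x,E)| > -C_{f,g} - \kappa - \epsilon_0^{1/4} - N^{-\sigma}
\]
for $x$ outside a set of measure $< e^{-cN^\sigma}$.

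For the numerator, a parallel large deviation estimate applied to $\frac{1}{N}\log|\det B_{n,n'}(x,E)|$ yields a matching upper bound together with an extra factor $e^{-\rho|n-n'|}$. This decay factor comes from the combinatorial observation that any permutation contributing to the expansion of $\det B_{n,n'}$ must accumulate off-diagonal displacement of total length at least $|n-n'|$ (because row $n$ and column $n'$ have been removed, producing an index "gap"), while each off-diagonal entry of $B_N$ is bounded by (\ref{a3}), $|\hat\phi(k)| \leq e^{-\rho|k|}$. Dividing the two bounds produces, for $x$ outside a bad set $\Omega_0 = \Omega_0(E)$ of measure $< e^{-cN^\sigma}$,
\[
|B_N^{-1}(x,E)(n,n')| \leq e^{c_0(\kappa + \epsilon_0^\delta)N - c_0|n-n'|}.
\]

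The $m\omega$-shift in the statement is then obtained by setting $\Omega_N(E) = \bigcap_{|m|<\sqrt{N}}(\Omega_0 - m\omega)$: the intersection is contained in each translate, so $\mathrm{mes}\,\Omega_N \leq \mathrm{mes}\,\Omega_0 < e^{-cN^\sigma}$, and $x \notin \Omega_N$ automatically yields some $|m|<\sqrt{N}$ with $x+m\omega \notin \Omega_0$. The hardest step will be extracting the decay $e^{-\rho|n-n'|}$ from the minor: in the bounded setting of \cite{B05} a direct Neumann-series argument suffices, but here the diagonal entries $(g-Ef)/\sqrt{1+E^2}$ can be arbitrarily small near the poles of $v$, so Lemma \ref{l3.5} and the Cartan-type tools of Sections 2--3 are needed to rule out $x$ for which many diagonal entries are simultaneously small.
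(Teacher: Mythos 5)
The overall shape of your plan is right: reduce to $|B_N^{-1}(n,n')| = |\det B_{n,n'}|/|\det B_N|$, lower-bound the denominator via the large deviation theorem, upper-bound the numerator with the $e^{-\rho|n-n'|}$ decay coming from the combinatorics of displacements (your observation that $\sum_k |\sigma(k)-k| \geq |n-n'|$ is correct). But there are two substantive errors that break the argument as written.

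First, you misapply Theorem \ref{t4.2}. That theorem bounds the \emph{Fej\'er average} $\sum_{|m|<M}\frac{M-|m|}{M^2}u(x+m\omega)$ against $\hat u(0)$, not the pointwise value $u(x)$; writing ``Theorem \ref{t4.2} gives $\frac1N\log|\det B_N(x,E)|>\cdots$'' is not what the theorem delivers. From the Fej\'er average being close to $\hat u(0)$, one can only conclude that \emph{some} $|m|<\sqrt N$ satisfies $u(x+m\omega)\geq \hat u(0)-N^{-\sigma}$ (pigeonhole, using boundedness of $u$, for which the paper regularizes to $u=\frac1N\log[|\det B_N|+\tilde C^{-N}]$ with $\tilde C>10(C_{f,g}+1)$). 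The shift $m$ in the proposition is therefore \emph{forced by the theorem}, not a cosmetic device you can later graft on by intersecting translates of a single bad set; your $\Omega_N=\bigcap_{|m|<\sqrt N}(\Omega_0-m\omega)$ construction would only make sense if you already had the pointwise bound, which you do not.

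Second, and more serious, your denominator bound throws away precisely the quantity that must cancel with the numerator. You write ``Lemma \ref{l4.1} combined with (\ref{a23}) yields $\int u\,dx > -C_{f,g}-\kappa-\epsilon_0^{1/4}$'' and then proceed with $-C_{f,g}-\kappa-\epsilon_0^{1/4}$ as the denominator's exponent. But the deterministic numerator bound obtained from the path expansion, Hadamard, and Denjoy--Koksma (the paper's (\ref{a37})) is
\[
|\det B_{n,n'}(x,E)| < e^{N\int_0^1 \log\frac{|g-Ef|}{\sqrt{1+E^2}}\,dx + N\epsilon_0^{\delta_2}+N\epsilon_0^{\delta_0/2}-\frac{\rho}{2}|n-n'|},
\]
which still carries the $E$-dependent integral. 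The whole point of keeping that integral in the denominator bound (the paper's (\ref{a28})) is that it cancels when you take the ratio, leaving only the small constant $\kappa+\epsilon_0^\delta$ in front of $N$. With your replacement, the ratio is controlled only by something like $e^{(C_1+C_{f,g})N-\frac\rho2|n-n'|}$ where $C_1$ is an upper bound for $\int\log\frac{|g-Ef|}{\sqrt{1+E^2}}$ --- a fixed large constant that cannot be made small. That destroys the estimate: the subsequent resolvent-identity and paving arguments (Lemma \ref{l5.4}) require the $N$-coefficient to be $(\kappa+\epsilon_0^\delta)$, which is \emph{small} once $\alpha$ and $\epsilon_0$ are chosen small, and your version has no such smallness. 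Finally, your suggestion to apply a ``parallel LDT'' to $\frac1N\log|\det B_{n,n'}|$ is a red herring: the paper's numerator bound is purely deterministic (uniform in $x$), via Hadamard, Denjoy--Koksma, and the uniform Lojasiewicz input (Lemma \ref{l3.5}, method of Lemma~11.29 in \cite{B05}) applied to the complementary block $R_{[0,N)\backslash\gamma}B_NR_{[0,N)\backslash\gamma}$; an LDT would only add an unnecessary $E$- and $(n,n')$-dependent exceptional set while leaving you to prove the same deterministic bound on the mean anyway.
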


\begin{proof}
Take $\tilde{C}>10(C_{f,g}+1)$, where $C_{f,g}$ is in (\ref{a23}).
The function
\begin{equation}\label{a27}
 u(x)=\frac{1}{N} \log[|\det B_N(x,E)|+\tilde{C}^{-N}]
\end{equation}
admits a bounded subharmonic extension $u(z)$ to $|\mathrm{Im} z|\leq \rho_{1}$.
By Theorem  \ref{t4.2}, for $x\notin \Omega,\mathrm{mes}\Omega<e^{-cN^\sigma}$,
there is $|m|<\sqrt{N}$, such that $u(x+m\omega)\geq \hat{u}(0)-N^{-\sigma}$.
By Lemma \ref{l4.1},
\begin{equation}\label{a28}
|\det B_N(x+m\omega,E)|\geq e^{N\int_{0}^{1}\log\frac{|g(x)-Ef(x)|}{\sqrt{1+E^2}} dx-(\kappa+\epsilon_{0}^{\frac{1}{4}})N-N^{1-\sigma}}.
\end{equation}

We want to obtain an upper bound on $|\det B_{n,n'}(x,E)|$ uniformly in $x$.
Express $\det B_{n,n'}(x,E)$ as a sum over paths $\gamma$ as
\begin{equation}\label{a29}
\sum_{s}\sum_{|\gamma|=s}\pm\det[R_{[0,N)\backslash\gamma}B_{N}(x,E)R_{[0,N)\backslash\gamma}]
\left(\frac{\epsilon}{\sqrt{1+E^2}}\right)^{s-1}
\prod_{i=1}^{s-1}\left[\hat{\phi}(\gamma_{i+1}-\gamma_{i})f(x+\gamma_{i+1}\omega)\right],
\end{equation}
where $\gamma=(\gamma_{1},\ldots,\gamma_{s})$ is a sequence in $[0,N)$ with $\gamma_{1}=n,\gamma_{s}=n'$.

Hence
\begin{equation}\label{a30}
|\det B_{n,n'}(x,E)|<\sum_{s}\sum_{|\gamma|=s}\epsilon^{s-1}e^{-\rho\sum\limits_{i=1}^{s-1}|\gamma_{i+1}-\gamma_{i}|}
|\det[R_{[0,N)\backslash\gamma}B_{N}(x,E)R_{[0,N)\backslash\gamma}]|.
\end{equation}

If we denote $b=\sum\limits_{i=1}^{s-1}|\gamma_{i+1}-\gamma_{i}|\geq|n-n'|$ and use the fact that there are at most $2^{s-1}\binom{b}{s-1} (s,b)$-paths, then
\begin{equation}\label{a31}
|\det B_{n,n'}(x,E)|<\sum_{b\geq|n-n'|}\sum_{s\leq b+1}2^{s-1}\binom{b}{s-1}\epsilon^{s-1}e^{-\rho b}\max_{|\gamma|=s}|\det[R_{[0,N)\backslash\gamma}B_{N}(x,E)R_{[0,N)\backslash\gamma}]|.
\end{equation}

By Hadamard inequality,
\begin{equation}\label{a32}
|\det[R_{[0,N)\backslash\gamma}B_{N}(x,E)R_{[0,N)\backslash\gamma}]|
\leq\prod_{k\in[0,N)\backslash\gamma}\left[\frac{|g(x+k\omega)-Ef(x+k\omega)|}{\sqrt{1+E^2}}+\epsilon_{0}(\|\hat{\phi}\|_{1}+1)\right].
\end{equation}

Let
\begin{equation}\label{a33}
S_{1}=\sum_{k\in[0,N)}\log\left[\frac{|g(x+k\omega)-Ef(x+k\omega)|}{\sqrt{1+E^2}}+\epsilon_{0}(\|\hat{\phi}\|_{1}+1)\right].
\end{equation}
By Denjoy-Koksma type inequality and Lemma \ref{l3.5},
\begin{equation}\label{a34}
S_{1}\leq N\int_{0}^{1}\log\left[\frac{|g(x)-Ef(x)|}{\sqrt{1+E^2}}+\epsilon_{0}(\|\hat{\phi}\|_{1}+1)\right]dx+N^{1-\delta_1}
\end{equation}
\begin{equation*}
  \leq N\int_{0}^{1}\log\frac{|g(x)-Ef(x)|}{\sqrt{1+E^2}}dx+N\epsilon_{0}^{\delta_2},\delta_1=\delta_1(\omega)>0,\delta_2=\delta_2(f,g)>0.
\end{equation*}

Let
\begin{equation}\label{a35}
S_{2}=\sum_{k\in\gamma}\log\left[\frac{|g(x+k\omega)-Ef(x+k\omega)|}{\sqrt{1+E^2}}+\epsilon_{0}(\|\hat{\phi}\|_{1}+1)\right],\quad |\gamma|=s.
\end{equation}
By Lemma \ref{l3.5}, using the method of Lemma 11.29 in \cite{B05}, we can prove that
if $|\gamma|=s>\epsilon_{0}^{\delta_0}N$, the for all $x$,
$S_{2}\geq \frac{3}{4}s\log\epsilon_{0}$, where $\delta_0=\delta_0(f,g)>0$.

By (\ref{a31})-(\ref{a35}),
\begin{equation}\label{a36}
|\det B_{n,n'}(x,E)|<\sum_{b\geq|n-n'|}\sum_{s\leq b+1,s\leq\epsilon_{0}^{\delta_0}N}2^{s-1}\binom{b}{s-1}\epsilon^{s-1}e^{-\rho b}
(\frac{1}{\epsilon_{0}})^{s}
e^{N\int_{0}^{1}\log\frac{|g(x)-Ef(x)|}{\sqrt{1+E^2}} dx+N\epsilon_{0}^{\delta_2}}
\end{equation}
\begin{equation*}
  +\sum_{b\geq|n-n'|}\sum_{s\leq b+1,s>\epsilon_{0}^{\delta_0}N}2^{s-1}\binom{b}{s-1}\epsilon^{s-1}e^{-\rho b}
(\frac{1}{\epsilon_{0}})^{\frac{3}{4}s}
e^{N\int_{0}^{1}\log\frac{|g(x)-Ef(x)|}{\sqrt{1+E^2}} dx+N\epsilon_{0}^{\delta_2}}.
\end{equation*}

Follow the proof of Proposition 3.1 in \cite{SY}, we have
\begin{equation}\label{a37}
|\det B_{n,n'}(x,E)|<e^{N\int_{0}^{1}\log\frac{|g(x)-Ef(x)|}{\sqrt{1+E^2}} dx+N\epsilon_{0}^{\delta_2}+N\epsilon_{0}^{\frac{\delta_0}{2}}-\frac{\rho}{2}|n-n'|}.
\end{equation}

Using (\ref{a28}), (\ref{a37}),
we have for $x\notin \Omega$, there is $|m|<\sqrt{N}$, such that
\begin{equation}\label{a38}
|B_{N}(x+m\omega,E)^{-1}(n,n^{\prime})|<e^{-\frac{\rho}{2}|n-n'|+(\kappa+\epsilon_{0}^{\delta })N}.
\end{equation}
This proves the Green's function estimate.
\end{proof}

\begin{rem}\label{r4.4}
In the proof of Proposition \ref{p4.3}, we only need to assume
\begin{equation*}
  \|k\omega\|>\frac{a}{|k|^{A}},\quad \forall 0<|k|\leq N.
\end{equation*}
\end{rem}

\section{Proof of Anderson localization}

In this section, we give the proof of Anderson localization as in \cite{BG}.

We first recall some basic facts of semi-algebraic sets . Let $$ \mathcal{P}=\{P_1,\ldots,P_s\}\subset\mathbb{R}[X_1,\ldots,X_n]$$
be a family of real polynomials whose degrees are bounded by $d$.
A semi-algebraic set is given by
\begin{equation}\label{b1}
S=\bigcup_{j}\bigcap_{l\in L_{j}}\{\mathbb{R}^{n}: P_ls_{jl}0 \},
\end{equation}
where $L_{j}\subset\{1,\ldots,s\},s_{jl}\in\{\leq,\geq,=\}$ are arbitrary.
We say that $S$ has degree at most $sd$ and its degree is the $\inf$ of $sd$ over all representations as in (\ref{b1}).

We need the following quantitative version of the Tarski-Seidenberg principle.
\begin{prop}[\cite{BPR}]\label{p5.1}
Let $S\subset\mathbb{R}^{n}$ be a semi-algebraic set of degree $B$, then any projection of $S$ is semi-algebraic of degree at most $B^{C}, C=C(n)$.
\end{prop}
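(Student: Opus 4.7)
The plan is to prove this by explicit quantifier elimination, following Collins' cylindrical algebraic decomposition (CAD), and then iterating. It suffices to treat the single projection $\pi:\mathbb{R}^n\to\mathbb{R}^{n-1}$ dropping the last coordinate $X_n$, because an arbitrary coordinate projection is a composition of at most $n$ such maps (after a permutation of coordinates), and the constant $C(n)$ can absorb this finite iteration.

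First I would regard each $P_i$ as a polynomial in $X_n$ with coefficients in $\mathbb{R}[X_1,\ldots,X_{n-1}]$ and form the \emph{Collins projection family} $\mathrm{Proj}(\mathcal{P})$ consisting of (a) the leading coefficients of $P_i$ viewed as a polynomial in $X_n$, (b) the discriminants $\mathrm{Disc}_{X_n}(P_i)$, and (c) the resultants $\mathrm{Res}_{X_n}(P_i,P_j)$ for $i\neq j$, or more robustly all principal subresultant coefficients of $(P_i,P_j)$ and $(P_i,\partial_{X_n}P_i)$. This produces $O(s^2)$ polynomials in $X_1,\ldots,X_{n-1}$ of degree at most $O(d^2)$, contributing a combined degree in the sense of the proposition of at most $B^{C_1}$.

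The geometric content to invoke is that on every connected component $U$ of the complement in $\mathbb{R}^{n-1}$ of the zero set of $\mathrm{Proj}(\mathcal{P})$, the real roots of each $P_i(y,\cdot)$ vary continuously with $y\in U$, their number is constant, and the sign of every $P_j(y,x_n)$ on each open interval cut out by these roots is constant in $y$. Hence the slabs $\{(y,x_n):y\in U,\ \alpha_k(y)<x_n<\alpha_{k+1}(y)\}$ together with their graph boundaries form a cylindrical decomposition of $\mathbb{R}^n$ adapted to $\mathcal{P}$, and $S$ is a union of such cells. Consequently $\pi(S)$ is the union of those base cells $U$ over which at least one fiber slab of the decomposition lies in $S$, which is manifestly a Boolean combination of sign conditions on $\mathrm{Proj}(\mathcal{P})$.

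The main obstacle is to decide, with quantitative control, \emph{which} base cells $U$ belong to $\pi(S)$, i.e.\ the adjacency/realization problem. I would resolve it via the Thom encoding: a real root of a univariate polynomial is uniquely specified by the sign vector of the polynomial's successive derivatives at that root, so membership of a given fiber slab in $S$ becomes a Boolean combination of sign conditions on the original $P_i$ together with their iterated $X_n$-derivatives. This enlarged auxiliary family still has size and degree bounded by $B^{C_2}$, so assembling the resulting sign tests yields a defining formula for $\pi(S)$ of degree $\leq B^{C}$, the exponent $C=C(n)$ absorbing both the subresultant blow-up and the derivative enlargement across the iteration over coordinates.
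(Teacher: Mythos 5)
The paper does not prove Proposition \ref{p5.1} at all — it is imported verbatim from \cite{BPR} as a black box — so there is no in-paper argument to compare against. Your CAD/Collins route is a legitimate classical path to the statement, but it is genuinely different from what the cited source does: Basu–Pollack–Roy derive quantitative quantifier elimination by the critical-point / algebraic-sampling method and obtain a singly exponential $C(n)$, whereas iterated Collins projection gives only a doubly exponential $C(n)$. Both are admissible here because the proposition allows any $C$ depending only on $n$, so the choice of route is a matter of economy rather than correctness.

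That said, the realization step in your sketch has a real gap. You assert that it is ``manifest'' that $\pi(S)$ — the union of those base cells $U$ over which some fiber slab lies in $S$ — is a Boolean combination of sign conditions on $\mathrm{Proj}(\mathcal{P})$. It is not: base cells are \emph{connected components} of sign-constant regions of $\mathrm{Proj}(\mathcal{P})$, not the sign-constant regions themselves, and delineability is guaranteed only over connected cells; two components sharing a sign vector of $\mathrm{Proj}(\mathcal{P})$ can carry different root counts and interlacing patterns for the $P_i(y,\cdot)$. The Thom encoding you invoke does not repair this directly, because the sign vector of $\partial_{X_n}^{j}P_i$ evaluated at a root $\alpha_k(y)$ is an algebraic function of $y$, not a polynomial in $X_1,\ldots,X_{n-1}$, so you are still left with an elimination to perform. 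The standard fix, which is Collins' own, is to enlarge the projection family with iterated trailing-coefficient truncations and the full principal subresultant (Sturm–Habicht) sequences of all relevant pairs, then \emph{recurse} the CAD construction down to $\mathbb{R}$, where Thom's lemma applied to univariate polynomials and their derivatives finally produces sign-condition descriptions of individual cells, and propagate these descriptions back up the tower. This still stays within degree $B^{C(n)}$ but is substantially more involved than the one-step elimination your sketch suggests; as written, the ``manifestly a Boolean combination'' claim is the point at which the proof would break.
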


Next fact deals with the intersection of a semi-algebraic set of small measure and the orbit of a diophantine shift.
\begin{prop}[Corollary 9.7 in \cite{B05}]\label{p5.2}
Let $S\subset[0,1]^{n}$ be semi-algebraic of degree $B$ and ${\rm mes}_{n}S<\eta$.
Let $\omega\in\mathbb{T}^{n}$ satisfy a $DC$ and
\begin{equation*}
  \log B\ll\log N\ll\log\frac{1}{\eta}.
\end{equation*}
Then for any $x_0\in\mathbb{T}^{n}$,
\begin{equation*}
  \#\{k=1,\ldots,N :x_0+k\omega\in S\}<N^{1-\delta}
\end{equation*}
for some $\delta=\delta(\omega)>0$.
\end{prop}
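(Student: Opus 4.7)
The plan is to combine the tame cell structure of semi-algebraic sets, afforded by Proposition~\ref{p5.1}, with the equidistribution of Diophantine orbits on $\mathbb{T}^{n}$. The target bound $N^{1-\delta}$ will come out as the worse of a one-dimensional discrepancy error and a volume-times-count contribution, and both are forced to be small by the hypothesis $\log B\ll\log N\ll\log\frac{1}{\eta}$.

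First I would produce a thin covering of $S$. By cylindrical algebraic decomposition applied to the polynomials defining $S$, one partitions $[0,1]^{n}$ into at most $B^{C_{1}(n)}$ semi-algebraic cells, each contained in a coordinate-aligned box $Q_{i}$. Using $\mathrm{mes}_{n}(S)<\eta$ and distributing the volume over the cells meeting $S$, one arranges that for each such cell at least one side length satisfies $\tau_{i}<\eta^{c/B^{C_{2}(n)}}$. Then I would apply a one-dimensional discrepancy estimate along the thin direction: for a box with short side $\tau$ in coordinate $j$, the count reduces to the number of $k\in[1,N]$ with $x_{0}^{(j)}+k\omega_{j}$ falling in an interval of length $\tau$. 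The DC bound $\|k\omega_{j}\|>a|k|^{-A}$ together with an Erd\H{o}s--Tur\'an style discrepancy inequality gives that this is at most $N\tau+CN^{1-\sigma}$ for some $\sigma=\sigma(\omega)>0$. Summing over cells,
\begin{equation*}
\#\{k\in[1,N]:x_{0}+k\omega\in S\}\lesssim B^{C_{1}}\bigl(N\eta^{c/B^{C_{2}}}+N^{1-\sigma}\bigr),
\end{equation*}
and both terms are absorbed into $N^{1-\delta}$ once $\log B\ll\log N\ll\log\frac{1}{\eta}$.

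The main obstacle is the first step: producing the thin covering with the number of cells polynomial in $B$ and at least one short side per cell. I would proceed by induction on $n$. For $n=1$, a semi-algebraic subset of $[0,1]$ of degree $B$ is a union of at most $B$ intervals, so total measure $\eta$ forces some interval to have length $\le\eta/B$. For $n>1$, fix all but one coordinate, use Proposition~\ref{p5.1} to bound the degree of every slice by $B^{C(n)}$, and separate ``tall'' slices (where the one-dimensional slice-measure is not too small, handled inductively on the transverse coordinate after another Tarski--Seidenberg projection) from ``short'' slices (handled by the $n=1$ base case). Once this decomposition is in hand, the remainder is standard Diophantine discrepancy theory, and careful bookkeeping of the constants $C_{1},C_{2},c$ yields the claimed quantitative conclusion.
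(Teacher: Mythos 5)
Your overall template---replace $S$ by a controlled covering and then apply discrepancy for the Diophantine orbit---is the right one and is the same template used in the cited source, but there are two genuine gaps in how you set it up.

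First, the thin-covering claim is false as stated. Cylindrical algebraic decomposition bounds the \emph{number} of cells by $B^{C(n)}$, but it does not force any cell's coordinate-aligned bounding box to have a short side. Consider $S=\{(x,y)\in[0,1]^2:\;|y-x^2|<\eta/2\}$: its measure is about $\eta$, its degree is $O(1)$, yet the single thin parabolic strip sweeps through essentially all $x$ and all $y$, so its bounding box is essentially $[0,1]^2$ with both sides of unit length no matter how small $\eta$ is. The statement that ``one arranges for each cell at least one side length $\tau_i<\eta^{c/B^{C_2(n)}}$'' is not something you can derive from small total measure and a polynomial cell count; a cell of tiny measure can still be spread out. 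The correct covering statement in \cite{B05} (the lemma used to prove Corollary 9.7 there) is a Yomdin--Gromov type bound: a semi-algebraic set of degree $B$ and measure $<\epsilon^n$ can be covered by at most $B^{C}\,\epsilon^{1-n}$ \emph{cubes} of side $\epsilon$. That is what then feeds into the discrepancy estimate after choosing $\epsilon$ between $\eta^{1/n}$ and a small negative power of $N$; it cannot be replaced by ``boxes with one short side'' coming from raw CAD.

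Second, and independently, the one-dimensional discrepancy step along the ``thin coordinate'' is not justified by the hypothesis. You invoke $\|k\omega_j\|>a|k|^{-A}$ for a single coordinate $\omega_j$, but the Diophantine condition in the statement is on the full vector: $\|k\omega\|_{\mathbb{T}^n}>a|k|^{-A}$. This does \emph{not} imply each component $\omega_j$ is Diophantine on $\mathbb{T}$; for instance $\omega=(\sqrt{2},1/2)\in\mathbb{T}^2$ satisfies a vector DC while $\omega_2=1/2$ is rational, so the one-dimensional orbit $\{x_0^{(2)}+k\omega_2\}$ has terrible discrepancy. Once you work with $\epsilon$-cubes rather than one-direction-thin boxes, this problem disappears, since you then use the $n$-dimensional discrepancy of $\{k\omega\}_{k\le N}$ on $\mathbb{T}^n$, which \emph{is} controlled by the vector DC. So replacing the CAD thin-box covering by the $\epsilon$-cube covering both fixes the geometric gap and removes the spurious per-coordinate Diophantine assumption; the remaining bookkeeping with $\log B\ll\log N\ll\log\frac{1}{\eta}$ then goes through as you outlined.
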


We will make essential use of the following transversality property.
\begin{lem}[Lemma 9.9 in \cite{B05}]\label{l5.3}
 Let $S\subset[0,1]^{2n}$ be a semi-algebraic set of degree $B$ and ${\rm mes}_{2n}S<\eta, \log B\ll\log\frac{1}{\eta}$.
We denote $(\omega,x)\in[0,1]^{n}\times[0,1]^{n}$ the product variable and $\{e_j:0\leq j\leq n-1\}$ the $\omega$-coordinate vectors.
Fix $\epsilon>\eta^{\frac{1}{2n}}$. Then there is a decomposition $S=S_1\cup S_2$,
 $S_1$ satisfying
 \begin{equation*}
{\rm mes}_{n}({\rm Proj}_\omega S_1)<B^{C}\epsilon
 \end{equation*}
 and $S_2$ satisfying the transversality property
\begin{equation*}
 {\rm mes}_{n}(S_2\cap L)<B^{C}\epsilon^{-1}\eta^{\frac{1}{2n}}
\end{equation*}
for any $n$-dimensional hyperplane $L$ such that $\max\limits_{0\leq j\leq n-1}|{\rm Proj}_L(e_j)|<\frac{\epsilon}{100}$.
\end{lem}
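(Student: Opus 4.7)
The plan is to combine a Chebyshev-type decomposition with respect to fiber size with a quantitative analysis of how nearly-horizontal $n$-planes slice a semi-algebraic set, using Proposition~\ref{p5.1} to keep auxiliary bad sets of bounded semi-algebraic complexity. Write $S_\omega=\{x:(\omega,x)\in S\}$. Fubini gives $\int_{[0,1]^n}|S_\omega|\,d\omega=|S|<\eta$. Choose the threshold $t=\eta^{1/(2n)}/\epsilon$, set $\Omega_1=\{\omega:|S_\omega|>t\}$, and put $S_1^{(0)}=S\cap(\Omega_1\times[0,1]^n)$. Markov's inequality gives $|\Omega_1|<\eta/t=\epsilon\,\eta^{1-1/(2n)}<\epsilon$, which already yields $\mathrm{mes}_n(\mathrm{Proj}_\omega S_1^{(0)})<\epsilon$, with room for an additional $B^C$ factor.

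For the transversality, fix a near-horizontal $n$-plane $L$ with $\max_j|\mathrm{Proj}_L(e_j)|<\epsilon/100$. The hypothesis means the $\omega$-axes are nearly normal to $L$, so $L$ is parameterized by its $x$-projection as $L=\{(\omega_L(x),x):x\in\pi_x(L)\}$ where $\omega_L$ is affine with operator norm $O(\epsilon)$. Thus $L$ is contained in a thin tube $\{|\omega-\omega_0|<O(\epsilon)\}\times[0,1]^n$ for some $\omega_0$, and the parameterization $x\mapsto(\omega_L(x),x)$ is bi-Lipschitz with constants $1+O(\epsilon^2)$. If $L$ meets $S_2$ transversely at every intersection point, the $n$-dimensional slice measure satisfies $\mathrm{mes}_n(S_2\cap L)\lesssim\sup_\omega|S_{2,\omega}|\le t=\eta^{1/(2n)}/\epsilon$, which is already the desired bound.

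The main obstacle is ensuring this transversality: a naive Fubini argument bounds the $(2n)$-dimensional volume of $S_2$ in the tube by $O(\epsilon^n)t$, which does not by itself control the $n$-dimensional diagonal section along the graph of $\omega_L$. To close the gap I would enlarge $S_1^{(0)}$ by a \emph{tangential set} $T$ consisting of points of $S$ at which the local tangent directions of $S$ have small $\omega$-components (so that $S$ is itself nearly horizontal there, and near-horizontal $L$'s can have large tangential contact with $S$). Such points project many-to-one onto $\omega$, so $\mathrm{Proj}_\omega T$ is small; and since the tangency condition is first-order expressible in the defining polynomials of $S$, Proposition~\ref{p5.1} shows $T$ is semi-algebraic of degree $B^C$ with $\mathrm{mes}_n(\mathrm{Proj}_\omega T)\le B^C\epsilon$. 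Setting $S_1=S_1^{(0)}\cup T$ and $S_2=S\setminus S_1$, every near-horizontal $L$ meets $S_2$ transversely, yielding $\mathrm{mes}_n(S_2\cap L)<B^C\epsilon^{-1}\eta^{1/(2n)}$. The assumption $\log B\ll\log(1/\eta)$ keeps the $B^C$ factors moderate; the hardest step is formulating the tangency condition semi-algebraically with the correct quantitative bounds on both the degree of $T$ and the measure of its $\omega$-projection.
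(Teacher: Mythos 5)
The Fubini–Chebyshev construction of $S_1^{(0)}$ is correct and does reproduce the first half of the conclusion, and you have correctly identified the real difficulty: there is no a priori relation between the $n$-dimensional measure of the diagonal slice $S_2\cap L$ and the $\omega$-fiber measures $|S_{2,\omega}|$. But the proposed resolution does not close this gap; it restates the difficulty in different language.

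The central unsupported step is the assertion that ``if $L$ meets $S_2$ transversely at every intersection point, then $\mathrm{mes}_n(S_2\cap L)\lesssim\sup_\omega|S_{2,\omega}|$.'' This is not a consequence of transversality in any standard sense. First, $S_2$ is a full-dimensional semi-algebraic region, not a submanifold, so ``tangent directions of $S$'' are only meaningful on the boundary strata, and transversality of $L$ to $\partial S_2$ is a local, measure-zero condition which controls the regularity of $\partial(S_2\cap L)$ inside $L$, not the $n$-measure of the region $S_2\cap L$ itself. Second, even in the simplest transversal situations the ratio $\mathrm{mes}_n(S_2\cap L)/\sup_\omega|S_{2,\omega}|$ is not $O(1)$: taking $S_2$ to be a disjoint union of $K$ boxes aligned so that a single near-horizontal $L$ passes through all of them already makes the ratio of order $K$, and $K$ is only controlled by the degree $B$. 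That is exactly why the lemma's conclusion carries a $B^{C}$ factor; your claimed bound drops it, and the argument that would reinsert it is precisely what is missing.

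The ``tangential set'' $T$ is likewise only a label for the obstruction. You assert that it is semi-algebraic of degree $B^{C}$ and that $\mathrm{mes}_n(\mathrm{Proj}_\omega T)\le B^{C}\epsilon$, and then that after removing $T$ the transversality estimate holds, but none of these three statements is argued: no explicit first-order semi-algebraic formula for the tangency condition is given (one must work with the singular strata of $\partial S$, not just a smooth tangent plane), the measure bound on $\mathrm{Proj}_\omega T$ is exactly the kind of quantitative ``near-horizontal boundary has small $\omega$-shadow'' statement that requires a genuine argument (Yomdin–Gromov-type stratification into Lipschitz graphs, or a polynomial differentiation/Bézout counting argument along lines), and the implication ``transversal $\Rightarrow$ measure bound'' is, as above, false without a component count supplying the $B^{C}$. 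You acknowledge at the end that this is the hardest step; that acknowledgment is accurate, and it means the proposal as written is an outline of where the difficulty lies rather than a proof of the lemma. The paper itself takes the lemma as a black box from \cite{B05}, where the proof is carried out by exploiting the algebraic cell structure of $S$ and counting connected components of one-dimensional slices, which is the machinery your sketch would need to import to make the tangential-set idea quantitative.
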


By application of the resolvent identity, we have the following
\begin{lem}\label{l5.4}
Let $I\subset\mathbb{Z}$ be an interval of size $N$ and $\{I_{\alpha}\}$ subintervals of size $M= N^{\tau}, \tau>0$ is small.
Assume $\forall k\in I$, there is some $\alpha$ such that
\begin{equation}\label{b2}
\left[k-\frac{M}{4},k+\frac{M}{4}\right]\cap I\subset I_\alpha
\end{equation}
and $\forall \alpha$,
\begin{equation}\label{b3}
|G_{I_{\alpha}}(n_1,n_2)|<e^{-c_0(|n_1-n_2|-(\kappa+\epsilon_{0}^{\delta })M)}, \quad  n_1,n_2\in I_{\alpha}.
\end{equation}
Then
\begin{equation}\label{b4}
|G_{I}(n_1,n_2)|<2e^{c_0(\kappa+\epsilon_{0}^{\delta })M}, \quad  n_1,n_2\in I ,
\end{equation}
\begin{equation}\label{b5}
|G_{I}(n_1,n_2)|<e^{-\frac{1}{2}c_0|n_1-n_2|}, \quad  n_1,n_2\in I ,|n_1-n_2|>\frac{N}{10}.
\end{equation}
\end{lem}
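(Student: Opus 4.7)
The plan is to prove Lemma \ref{l5.4} by the iterated resolvent identity (paving) argument standard in this subject, adapted to the long-range structure of $H$. For each $n_1 \in I$, fix a subinterval $I_{\alpha(n_1)}$ satisfying (\ref{b2}) and decompose $I = I_{\alpha(n_1)} \sqcup (I \setminus I_{\alpha(n_1)})$. Because the potential $v$ is diagonal, the only coupling across the partition is through the long-range term $\epsilon S_\phi$, and the Schur complement / resolvent identity yields
\begin{equation*}
G_I(n_1, n_2) = G_{I_{\alpha(n_1)}}(n_1, n_2)\,\chi_{I_{\alpha(n_1)}}(n_2)
\;-\; \epsilon \!\!\sum_{\substack{n' \in I_{\alpha(n_1)} \\ n'' \in I \setminus I_{\alpha(n_1)}}}\!
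G_{I_{\alpha(n_1)}}(n_1, n')\,\hat\phi(n'-n'')\,G_I(n'', n_2).
\end{equation*}

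Next I would introduce the kernel $K(n_1, n'') = \epsilon \sum_{n' \in I_{\alpha(n_1)}} |G_{I_{\alpha(n_1)}}(n_1, n')| \cdot |\hat\phi(n'-n'')|$ for $n'' \in I \setminus I_{\alpha(n_1)}$. Combining (\ref{b3}), (\ref{a3}), and the fact that $c_0 = c_0(\rho) < \rho$ (as seen by inspecting the end of the proof of Proposition \ref{p4.3}, where the rate is $\rho/2$), the standard convolution-of-exponentials estimate gives
\begin{equation*}
K(n_1, n'') \leq C\epsilon\, e^{c_0(\kappa+\epsilon_0^{\delta}) M}\, e^{-c_0 |n_1 - n''|}.
\end{equation*}
Since (\ref{b2}) forces $|n_1 - n''| \geq M/4$ for any $n'' \in I \setminus I_{\alpha(n_1)}$, the row sum is bounded by $C\epsilon\, e^{c_0(\kappa+\epsilon_0^\delta)M - c_0 M/8}$, which is $\leq 1/2$ provided $\kappa$ is taken small enough that $c_0(\kappa + \epsilon_0^\delta) < c_0/16$ and $\epsilon_0$ is small enough to swallow the prefactor. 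With this smallness in hand, taking absolute values in the displayed resolvent identity and iterating yields a convergent Neumann series whose $s$-th term inherits one factor $e^{c_0(\kappa+\epsilon_0^\delta)M}$ from the final $G_{I_{\alpha(\cdot)}}$ call and a geometric factor $2^{-s}$ from the row-sum control, summing to $\leq 2 e^{c_0(\kappa+\epsilon_0^\delta)M}$. This gives (\ref{b4}).

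For (\ref{b5}) the same iteration must additionally track the spatial distance. After expanding the identity $s$ times, each contribution corresponds to a chain $n_1 = m_0 \to m_1 \to \cdots \to m_s$ in $I$ with jumps $|m_j - m_{j+1}| \geq M/4$, weighted by $\prod_{j<s} K(m_j, m_{j+1})$ times the boundary factor $|G_{I_{\alpha(m_s)}}(m_s, n_2)|$. The product of exponential factors contains $\exp\{-c_0(\sum_{j<s}|m_j - m_{j+1}| + |m_s - n_2|)\}$, and the triangle inequality $\sum_{j<s}|m_j - m_{j+1}| + |m_s - n_2| \geq |n_1 - n_2|$ converts this into $e^{-c_0 |n_1 - n_2|}$. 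Summing over the intermediate $m_j$ using the exponential decay of $K$ and over $s$ using the row-sum smallness produces $|G_I(n_1, n_2)| \leq C e^{-c_0 |n_1 - n_2| + c_0(\kappa+\epsilon_0^\delta)M}$. For $|n_1 - n_2| > N/10 \gg M$, the loss $c_0(\kappa + \epsilon_0^\delta)M$ is much smaller than $\tfrac{1}{2} c_0 |n_1 - n_2|$, so the bound simplifies to $e^{-\tfrac{1}{2} c_0 |n_1-n_2|}$, which is (\ref{b5}).

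The main obstacle I expect is the bookkeeping of constants: verifying that the $c_0$ in the final exponent of (\ref{b5}) matches the $c_0$ from Proposition \ref{p4.3} (with no loss in the convolution step, which requires the strict inequality $c_0 < \rho$ noted above), and that the residual loss $e^{c_0(\kappa+\epsilon_0^\delta)M}$ per iteration can be absorbed uniformly over the geometric sum. These dictate the quantitative choice of $\kappa$ and $\epsilon_0$, which must remain compatible with the prior smallness conditions imposed in Lemma \ref{l4.1} and Proposition \ref{p4.3}.
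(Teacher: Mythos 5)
Your strategy is the same as the paper's: pave $I$ by the $I_\alpha$, apply the resolvent identity, iterate, and use the triangle inequality over chains of jumps $\geq M/4$. The derivation of (\ref{b4}) via row-sum smallness of the kernel $K$ (after the convolution with $\hat\phi$, using $c_0<\rho$) is correct and in fact slightly cleaner than the paper's two-case split of the boundary sum.

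For (\ref{b5}) there is a small but real slip in your intermediate claim $|G_I(n_1,n_2)|\leq Ce^{-c_0|n_1-n_2|+c_0(\kappa+\epsilon_0^\delta)M}$. If you extract the \emph{full} factor $e^{-c_0|n_1-n_2|}$ via the triangle inequality, what remains in the $s$-term chain sum is $\exp\{-c_0[(\sum|m_j-m_{j+1}|+|m_s-n_2|)-|n_1-n_2|]\}$, which is merely $\leq 1$ and carries no residual decay; the geometric gain $e^{-c_0 M/4}$ per step that you used to bound the row sum of $K$ by $\tfrac12$ \emph{is} part of the same spatial decay, so you cannot spend it twice. The standard repair is to split $e^{-c_0|m_j-m_{j+1}|}=e^{-\frac{c_0}{2}|m_j-m_{j+1}|}\cdot e^{-\frac{c_0}{2}|m_j-m_{j+1}|}$, use one half across the chain to produce $e^{-\frac{c_0}{2}|n_1-n_2|}$, and the other half to make the row sums of the modified kernel small; this gives $|G_I(n_1,n_2)|\leq 2e^{c_0(\kappa+\epsilon_0^\delta)M}e^{-\frac{c_0}{2}|n_1-n_2|}$, which is enough for (\ref{b5}) since $M\ll N/10<|n_1-n_2|$. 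The paper proceeds differently: it iterates the resolvent identity step by step with an explicit stopping rule (stop once $|n-n_t|\leq M$ or at $t=10N/M$), accumulating a worse loss of order $e^{20c_0(\kappa+\epsilon_0^\delta)N}$ plus a subexponential prefactor $M^tN^t$, and then compares to $c_0|m-n|>c_0N/10$. Both arguments require $\kappa$ and $\epsilon_0^\delta$ to be quantitatively small; once you fix the decay-splitting in your (\ref{b5}) argument, your version actually gives the sharper loss ($O(M)$ rather than $O(N)$), but the conclusion is the same.
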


\begin{proof}
For $m,n\in I$, there is some $\alpha$ such that
\begin{equation}\label{b6}
\left[m-\frac{M}{4},m+\frac{M}{4}\right]\cap I\subset I_\alpha.
\end{equation}
By resolvent identity,
\begin{equation}\label{b7}
|G_{I}(m,n)|\leq e^{c_0(\kappa+\epsilon_{0}^{\delta })M}+\sum_{m_1\in I_{\alpha},m_2\notin I_{\alpha}}|G_{I_{\alpha}}(m,m_1)|e^{-\rho|m_1-m_2|}|G_{I}(m_{2},n)|.
\end{equation}
If $|m_1-m|\leq\frac{M}{8}$, then $|m_1-m_2|\geq\frac{M}{8}$, hence
\begin{equation}\label{b8}
\sum_{|m_1-m|\leq\frac{M}{8},m_2\notin I_{\alpha}}|G_{I_{\alpha}}(m,m_1)|e^{-\rho|m_1-m_2|}<M e^{-\rho\frac{M}{8}}
e^{c_0(\kappa+\epsilon_{0}^{\delta })M}<\frac{1}{4}.
\end{equation}
If $|m_1-m|>\frac{M}{8}$, then
\begin{equation}\label{b9}
\sum_{|m_1-m|>\frac{M}{8},m_2\notin I_{\alpha}}|G_{I_{\alpha}}(m,m_1)|e^{-\rho|m_1-m_2|}
<M e^{-c_0\frac{M}{8}}e^{c_0(\kappa+\epsilon_{0}^{\delta })M}<\frac{1}{4}.
\end{equation}
By (\ref{b7}), (\ref{b8}), (\ref{b9}),
\begin{equation}\label{b10}
\max_{m,n\in I}|G_{I}(m,n)|<e^{c_0(\kappa+\epsilon_{0}^{\delta })M}+\frac{1}{2}\max_{m,n\in I}|G_{I}(m,n)|.
\end{equation}
(\ref{b4}) follows from (\ref{b10}).

Take $m,n\in I, |m-n|>\frac{N}{10}$, assume (\ref{b6}), by resolvent identity,
\begin{equation}\label{b11}
|G_{I}(m,n)|\leq \sum_{n_0\in I_{\alpha},n_1\notin I_{\alpha}}|G_{I_{\alpha}}(m,n_0)|e^{-\rho|n_0-n_1|}|G_{I}(n_1,n)|
\end{equation}
\begin{equation*}
  \leq Me^{c_0(\kappa+\epsilon_{0}^{\delta })M}\sum_{|m-n_1|>\frac{M}{4}}e^{-c_0|m-n_1|}|G_{I}(n_1,n)|
\end{equation*}
\begin{equation*}
 \leq M^{t}N^{t} e^{tc_0(\kappa+\epsilon_{0}^{\delta })M}
e^{-c_0(|m-n_1|+\cdots+|n_{t-1}-n_t|)}|G_{I}(n_t,n)|,
\end{equation*}
where $t\leq 10\frac{N}{M},|m-n_1|>\frac{M}{4},\ldots,|n_{t-1}-n_t|>\frac{M}{4}$.

If $|n-n_t|\leq M$, then by (\ref{b4}), (\ref{b11}),
\begin{equation}\label{b12}
 |G_{I}(m,n)|  \leq M^{t}N^{t}e^{tc_0(\kappa+\epsilon_{0}^{\delta })M}e^{-c_0(|m-n|-M)}2e^{c_0(\kappa+\epsilon_{0}^{\delta })M}
 \leq e^{20c_0(\kappa+\epsilon_{0}^{\delta })N-c_0|m-n|} <e^{-\frac{1}{2}c_0|m-n|}.
\end{equation}

If $t= 10\frac{N}{M}$, then by (\ref{b4}), (\ref{b11}),
\begin{equation}\label{b13}
 |G_{I}(m,n)|  \leq M^{t}N^{t}e^{tc_0(\kappa+\epsilon_{0}^{\delta })M}e^{-tc_0\frac{M}{4}}2e^{c_0(\kappa+\epsilon_{0}^{\delta })M}
\leq e^{40c_0(\kappa+\epsilon_{0}^{\delta })N-\frac{5}{2}c_0N}<e^{-2c_0N}<e^{-c_0|m-n|}.
\end{equation}

(\ref{b5}) follows from (\ref{b12}), (\ref{b13}). This proves Lemma \ref{l5.4}.
\end{proof}

Now we can prove the main result.
\begin{thm}\label{t5.5}
Consider the following long-range operators with singular potentials
\begin{equation}\label{b14}
H_{\omega }(x)=v (x+n\omega)\delta_{nn'}+\epsilon S_\phi ,
\end{equation}
where $v=\frac{g}{f}$, $f,g$ are real analytic on $\mathbb{T}$, $f,v$ are nonconstant,
$Z(f)=\{x\in\mathbb{T}:f(x)=0\}\neq\emptyset$.
Assume $\omega \in DC$ (diophantine condition),
\begin{equation}\label{b15}
\|k\omega\|>a| k |^{-A},\quad \forall k\in\mathbb{Z}\setminus\{0\}
\end{equation}
and $\phi$  real analytic satisfying
\begin{equation}\label{b16}
  |\hat{\phi}(n)|<e^{-\rho|n|},\quad \forall n \in \mathbb{Z}
\end{equation}
for some $\rho>0$. Fix $x_0\in\mathbb{T}$. Then there is $\epsilon_{0}=\epsilon_{0}(f,g,\phi)>0$, such that if
$0<\epsilon<\epsilon_{0}$, for almost all $\omega \in DC$, $H_{\omega}(x_0)$ satisfies Anderson localization.
\end{thm}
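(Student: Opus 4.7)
The plan is to adapt the Bourgain--Goldstein scheme of \cite{BG,B05} to this singular setting, using the Green's function bound of Proposition \ref{p4.3} as the analytic input. By a standard Schnol-type argument, Anderson localization for $H_\omega(x_0)$ reduces to showing that every polynomially bounded formal solution $\psi$ of $H_\omega(x_0)\psi = E\psi$ decays exponentially. Fix such a $\psi$, normalized so that $\psi(0) = 1$ and $|\psi(n)| \leq C(1+|n|)^C$, and fix the corresponding energy $E$. For any interval $\Lambda \subset \mathbb{Z}$ with $0 \notin \Lambda$ and any $n \in \Lambda$, the resolvent identity applied to the decomposition $H = H_\Lambda + (H - H_\Lambda)$ yields
$$\psi(n) = -\epsilon \sum_{m \in \Lambda,\ m' \notin \Lambda} G_\Lambda(x_0,E)(n,m)\, \hat\phi(m-m')\, \psi(m').$$
Consequently, if $\Lambda$ has size $N$, the off-diagonal exponential estimate of Proposition \ref{p4.3} holds on $\Lambda$, and $n$ lies well inside $\Lambda$, then $|\psi(n)| \leq N^{C} e^{-c|n|}$, giving exponential decay.

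The task is therefore to cover each large $|n|$ by such a good interval $\Lambda$ containing $n$ but avoiding $0$. Proposition \ref{p4.3} provides, for each scale $N$ and each energy $E$, a bad set $\Omega_N(E) \subset \mathbb{T}$ of measure at most $e^{-cN^\sigma}$ such that if the starting phase lies outside $\Omega_N(E)$, some shifted interval of size $N$ is good. The natural move is to apply this at the phases $x_0 + k\omega$ for $|k|$ up to $N^{10}$ and thereby cover an annulus of radii comparable to $N^{10}$. Doing so, however, forces us to confront the \emph{double resonance} problem: the eigenvalue $E$ might lie in the exceptional set both for an interval around $0$ and for an interval around the target $n$. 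This is the main obstacle, and is precisely where the singular nature of $v$ must be controlled --- the semi-algebraic description of $\Omega_N(E)$ is less immediate for $v = g/f$ than for polynomial $v$.

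To handle it, form the set
$$\mathcal{S}_N = \Bigl\{(\omega, x) \in [0,1]^2 : \exists\, E \in \mathbb{R} \text{ with } x \in \Omega_N(E) \text{ and } x + k\omega \in \Omega_N(E) \text{ for some } N \leq |k| \leq N^{10}\Bigr\},$$
where $f$, $g$ and $\phi$ are first replaced by their Fourier truncations at frequency $N^{10}$, producing an error exponentially small in $N$ by (\ref{12}) and the analyticity of $f,g$, and then the condition $x \in \Omega_N(E)$ is recast as a polynomial inequality in $(x,\omega,E)$ after clearing denominators in the entries of $B_N$. Then $\mathcal{S}_N$ is semi-algebraic of degree bounded by $N^{C}$, and by Fubini together with $\mathrm{mes}\,\Omega_N(E) < e^{-cN^\sigma}$ one obtains $\mathrm{mes}_2 \mathcal{S}_N < e^{-c'N^\sigma}$. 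Applying Lemma \ref{l5.3} with $\eta = e^{-c'N^\sigma}$ and some $\epsilon$ of order $e^{-c'N^\sigma/100}$ decomposes $\mathcal{S}_N = \mathcal{S}_{N,1} \cup \mathcal{S}_{N,2}$: the $\omega$-projection of $\mathcal{S}_{N,1}$ has exponentially small measure, and the transversality bound on $\mathcal{S}_{N,2}$ combined with Proposition \ref{p5.2} applied to the orbit $k \mapsto x_0 + k\omega$ shows that for $\omega$ outside a set of exponentially small measure, the number of $k \in [N, N^{10}]$ with $(\omega, x_0 + k\omega) \in \mathcal{S}_N$ is at most $N^{1-\delta}$. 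Hence for such good $\omega$ we find abundantly many $k$ giving a shifted good interval near the target $n$ that is simultaneously free of double resonance with the origin.

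Finally, the patching Lemma \ref{l5.4} assembles the local good intervals of size $M = N^\tau$ into a good Green's function bound on an interval $\Lambda$ covering $n$, yielding $|G_\Lambda(m_1,m_2)| < e^{-c_0|m_1-m_2|/2}$ whenever $|m_1 - m_2| > |\Lambda|/10$. Substituting into the resolvent identity gives $|\psi(n)| \leq e^{-c|n|}$ for all sufficiently large $|n|$ in the annulus under consideration. A Borel--Cantelli argument over dyadic scales $N_j = 2^j$, using Remark \ref{r4.4} so that only finitely many Diophantine conditions on $\omega$ enter at each scale, removes a null set of exceptional $\omega \in DC$; on the complement, every polynomially bounded generalized eigenfunction of $H_\omega(x_0)$ decays exponentially, which proves Anderson localization.
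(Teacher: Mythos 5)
Your overall scheme is in the right spirit and matches the architecture of the paper's proof: Shnol reduction, the Green's function estimate of Proposition \ref{p4.3}, semi-algebraic elimination of parameters, patching via Lemma \ref{l5.4}, and Borel--Cantelli over scales with Remark \ref{r4.4} controlling the Diophantine conditions. However, there is a genuine gap in your treatment of the energy variable, and it occurs at precisely the step you flag as the heart of the matter.

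You form the double-resonance set
\begin{equation*}
\mathcal{S}_N = \bigl\{(\omega,x) : \exists\, E\in\mathbb{R}\ \text{with}\ x\in\Omega_N(E)\ \text{and}\ x+k\omega\in\Omega_N(E),\ \text{some}\ N\le |k|\le N^{10}\bigr\}
\end{equation*}
and assert that $\mathrm{mes}_2\,\mathcal{S}_N < e^{-c'N^\sigma}$ ``by Fubini together with $\mathrm{mes}\,\Omega_N(E)<e^{-cN^\sigma}$.'' This inference is false. Fubini bounds the three-dimensional measure of $\bigl\{(\omega,x,E): x\in\Omega_N(E),\, x+k\omega\in\Omega_N(E)\bigr\}$ (and only after imposing a bound on the $E$-range, which you have not established), whereas $\mathcal{S}_N$ is the \emph{projection} of that set onto $(\omega,x)$. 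A projection can have measure vastly exceeding the measure of the set being projected --- $\bigl\{(x,E):|x-E|<\eta,\ 0\le E\le 1\bigr\}$ has area $\sim\eta$ but projects onto an interval of length $\sim 1$. Since $E$ ranges over a continuum and $\Omega_N(E)$ moves with $E$, the union $\bigcup_{E\in\mathbb{R}}\Omega_N(E)$ can easily be the whole torus, so the claimed measure bound on $\mathcal{S}_N$ simply does not follow.

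The paper closes this gap with an \emph{energy-pinning} step that your argument omits. Before any elimination of $E$, the generalized eigenfunction equation $H(x_0)\xi = E\xi$ together with the Green's function estimate on a good interval $I\subset[0,N_1]$ (obtainable from Proposition \ref{p5.2}) forces $\xi_j < e^{-c_0 N/3}$ at the center $j_0$ of $I$, and hence $\|G_{[-j_0,j_0]}(x_0,E)\|>e^{c_0N/5}$, i.e.\ $\mathrm{dist}\bigl(E,\,\mathrm{spec}\,H_{[-j_0,j_0]}(x_0)\bigr)<e^{-c_0N/5}$ --- this is (\ref{b26})--(\ref{b29}). Thus $E$ is necessarily exponentially close to the \emph{finite} set $\mathcal{E}_\omega=\bigcup_{|j|\le N_1}\mathrm{spec}\,H_{[-j,j]}(x_0)$ of cardinality $O(N_1^2)$. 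Only then does one take the union $\bigcup_{E'\in\mathcal{E}_\omega}\Omega(E')$, whose measure is $\le N_1^2\, e^{-cN^\sigma}<e^{-cN^{\sigma/2}}$, and form the semi-algebraic set $S_j$ in $(\omega,x,E')$ of (\ref{b37})--(\ref{b39}), where $E'$ is \emph{constrained} by the polynomial equation $\det(H_{[-j,j]}(x_0)-E')=0$. That algebraic constraint, not Fubini, is what makes the projected set $S$ both small and semi-algebraic of controlled degree, allowing Lemma \ref{l5.3} to take over. Without the pinning to $\mathcal{E}_\omega$, the $\exists E$ quantifier in your $\mathcal{S}_N$ is fatal to the measure estimate, and the subsequent applications of Lemma \ref{l5.3} and Proposition \ref{p5.2} do not get off the ground.
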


\begin{proof}
By Shnol's theorem \cite{H}, to establish Anderson localization, it suffices to show that if $\xi=(\xi_n)_{n\in\mathbb{Z}},E\in\mathbb{R}$ satisfy
\begin{equation}\label{b17}
\xi_0=1, |\xi_n|<C|n|,\quad |n|\rightarrow\infty,
\end{equation}
\begin{equation}\label{b18}
H(x_0)\xi=E\xi,
\end{equation}
then
\begin{equation}\label{b19}
|\xi_n|<e^{-c|n|},\quad |n|\rightarrow\infty.
\end{equation}

By Proposition \ref{p4.3},
there is $\Omega=\Omega_{N}(E)\subset\mathbb{T},\mathrm{mes}\Omega<e^{- cN^{\sigma}}$, such that if $x\notin \Omega$,
then for some $|m|<\sqrt{N}$,
\begin{equation}\label{b20}
|G_{[-N,N]}(x+m\omega,E)(n_1,n_2)|<e^{-c_{0}(|n_1-n_2|-(\kappa+\epsilon_{0}^{\delta })N)}, \quad |n_1|,|n_2|\leq N.
\end{equation}

As in Section 4,
\begin{equation}\label{b21}
 |G_{[-N,N]}(x+m\omega,E)(n_1,n_2)|=\frac{|f(x+m\omega+n_2\omega)|}{\sqrt{1+E^2}}| B_{[-N,N]}(x+m\omega,E)^{-1}(n_1,n_2)|.
 \end{equation}
Truncate power series for $f, g$ in (\ref{b21}), $\Omega$ may be assumed semi-algebraic of degree at most $N^8$.
Let $N_1=N^{C_{1}}$, $C_{1}$ is a sufficiently large constant. Then by Proposition \ref{p5.2},
\begin{equation}\label{b22}
\#\{|j|\leq N_1:x_{0}+j\omega\in\Omega\}<N_1^{1-\delta_{1}}, \quad\delta_{1}>0.
\end{equation}
Using (\ref{b22}), we may find an interval $I\subset[0,N_1]$ of size $N$ such that
\begin{equation}\label{b23}
x_{0}+j\omega\notin\Omega, \quad\forall j\in I \cup(-I).
\end{equation}
Then for some $|m_{j}|<\sqrt{N}$,
\begin{equation}\label{b24}
|G_{[a,b]}(x_{0},E)(n_1,n_2)|<e^{-c_{0}(|n_1-n_2|-(\kappa+\epsilon_{0}^{\delta })N)}, \quad n_1,n_2\in[a,b],
\end{equation}
where $[a,b]=[j+m_j-N,j+m_j+N]$.
 By (\ref{b17}), (\ref{b18}), (\ref{b24}),
\begin{equation}\label{b25}
|\xi_j|\leq C \sum_{n\in[a,b],n'\notin[a,b]}e^{-c_{0}(|j-n|-(\kappa+\epsilon_{0}^{\delta })N)}e^{-\rho|n-n'|}|n'|
\leq C N_1e^{c_{0}(\kappa+\epsilon_{0}^{\delta })N}e^{-\frac{c_0}{2}N}<e^{-\frac{c_0}{3}N}.
\end{equation}
Let $j_0$ be the center of $I$, we have
\begin{equation}\label{b26}
1=\xi_{0}\leq\|G_{[-j_0,j_0]}(x_0,E)\|\|R_{[-j_0,j_0]}H(x_0)R_{\mathbb{Z}\setminus[-j_0,j_0]}\xi\|.
\end{equation}

For $|n|\leq j_0$, by (\ref{b25}),
\begin{equation}\label{b27}
|(R_{[-j_0,j_0]}H(x_0)R_{\mathbb{Z}\setminus[-j_0,j_0]}\xi)_{n}|\leq\sum_{|n'|>j_0}e^{-\rho|n-n'|}|\xi_{n'}|
\end{equation}
\begin{equation*}
  \leq\sum_{j_0<|n'|\leq j_0+\frac{N}{2}}e^{-\rho|n-n'|}e^{-\frac{c_0}{3}N}+C\sum_{|n'|>j_0+\frac{N}{2}}e^{-\rho|n-n'|}|n'|
<Ce^{-\frac{c_0}{3}N}+CN_1e^{-\rho\frac{N}{3}}<e^{-\frac{c_0}{4}N}.
\end{equation*}

By (\ref{b26}), (\ref{b27}),
\begin{equation}\label{b28}
\|G_{[-j_0,j_0]}(x_0,E)\|>e^{\frac{c_0}{5}N},
\end{equation}
hence
\begin{equation}\label{b29}
{\rm dist}(E, {\rm spec} H_{[-j_0,j_0]}(x_0))<e^{-\frac{c_0}{5}N}.
\end{equation}

Denote
\begin{equation}\label{b30}
\mathcal{E}_{\omega}=\bigcup_{|j|\leq N_1}{\rm spec} H_{[-j,j]}(x_0) .
\end{equation}
It follows from (\ref{b29}) that if $x\notin\bigcup\limits_{E'\in\mathcal{E}_{\omega}}\Omega(E')$,
then for some $|m|<\sqrt{N}$,
\begin{equation}\label{b31}
|G_{[-N,N]+m}(x,E)(n_1,n_2)|<e^{-c_{0}(|n_1-n_2|-(\kappa+\epsilon_{0}^{\delta })N)}, \quad n_1,n_2\in[-N,N]+m .
\end{equation}

Let $N_2=N^{C_{2}}$, $C_{2}$ is a sufficiently large constant.
Suppose
\begin{equation}\label{b32}
x_{0}+n\omega\notin\bigcup_{E'\in\mathcal{E}_{\omega}}\Omega(E'), \quad \forall \sqrt{N_2}<|n|<2N_2,
\end{equation}
then by (\ref{b31}), there are $|m_{n}|<\sqrt{N}$ such that
\begin{equation}\label{b33}
|G_{[-N,N]+n+m_{n}}(x_{0},E)(n_1,n_2)|<e^{-c_{0}(|n_1-n_2|-(\kappa+\epsilon_{0}^{\delta })N)}, \quad n_1,n_2\in[-N,N]+n+m_{n} .
\end{equation}

Let $\Lambda=\bigcup\limits_{\sqrt{N_2}<n<2N_2}([-N,N]+n+m_{n})\supset[\sqrt{N_2},2N_2]$. By Lemma \ref{l5.4},
\begin{equation}\label{b34}
|G_{\Lambda}(x_{0},E)(n_1,n_2)|<2e^{c_0(\kappa+\epsilon_{0}^{\delta })N},\quad n_1,n_2\in\Lambda,
\end{equation}
\begin{equation}\label{b35}
|G_{\Lambda}(x_{0},E)(n_1,n_2)|<e^{-\frac{c_{0}}{2}|n_1-n_2|}, \quad n_1,n_2\in\Lambda ,|n_1-n_2|>\frac{N_2}{10}  .
\end{equation}

For $\frac{1}{2} N_2 \leq j \leq N_2$, by (\ref{b34}), (\ref{b35}),
\begin{equation}\label{b36}
|\xi_{j}|\leq C\sum_{n\in\Lambda,n'\notin\Lambda}|G_{\Lambda}(x_{0},E)(j,n)|e^{-\rho|n-n'|}|n'|
\end{equation}
\begin{equation*}
\leq CN_2\sum_{|n-j|>\frac{N_2}{10}}e^{-\frac{c_{0}}{2}|n-j|}+CN_2^{2}
e^{c_0(\kappa+\epsilon_{0}^{\delta })N}e^{-\rho\frac{N_2}{4}}
\leq e^{-\frac{c_0}{50}N_2} \leq e^{-\frac{c_0}{50}j}.
\end{equation*}

Now we need to prove (\ref{b32}). Consider for $|j|\leq N_1 $, the set $S_j\subset\mathbb{T}^{2}\times\mathbb{R}$ of $(\omega,x,E')$ where
\begin{equation}\label{b37}
 \| k\omega\|>a|k|^{-A},\quad \forall 0<|k|\leq N,
\end{equation}
\begin{equation}\label{b38}
x\in\Omega(E'),
\end{equation}
\begin{equation}\label{b39}
E'\in{\rm spec} H_{[-j,j]}(x_0) .
\end{equation}

Let
\begin{equation}\label{b40}
S={\rm Proj}_{\mathbb{T}^{2}}S_{j}.
\end{equation}
Since ${\rm mes}\Omega(E')<e^{-c N^{\sigma}}$,
\begin{equation}\label{b41}
{\rm mes}S<N_1e^{- c N^{\sigma}}<e^{-cN^{\frac{\sigma}{2}}}.
\end{equation}
Since $S_{j}$ is a semi-algebraic set of degree at most $N_1^{10}$, by Proposition \ref{p5.1},
$S$ is a semi-algebraic set of degree at most $N_1^{10C}$.

 Take $n=1, B=N_1^{10C}, \eta=e^{-cN^{\frac{\sigma}{2}}}, \epsilon =N_2^{-\frac{1}{10}}$
in Lemma \ref{l5.3}, we have $S=S_1\cup S_2$,
\begin{equation}\label{b42}
{\rm mes}{\rm Proj}_{\omega}S_1<B^{C}\epsilon<N_1^{C}N_2^{-\frac{1}{10}}<N_2^{-\frac{1}{11}}.
\end{equation}
We study the intersection of $S_{2}$ and sets
\begin{equation}\label{b43}
\{(\omega,x_0+n\omega):\omega\in[0,1]\},\quad \sqrt{N_2}<|n|<2N_2,
\end{equation}
where $x_0+n\omega$ are considered mod 1. (\ref{b43}) lies in the parallel lines
\begin{equation}\label{b44}
L=L_{m}^{(n)}=\left[\omega=\frac{x}{n}\right]-\frac{m+x_{0}}{n}e_{\omega}, \quad |m|<N_2.
\end{equation}
Since $|{\rm Proj}_{L}e_{\omega}|<\frac{\epsilon}{100}$, by Lemma \ref{l5.3},
\begin{equation}\label{b45}
{\rm mes}(S_{2}\cap L)<B^{C}\epsilon^{-1}\eta^{\frac{1}{2}}<N_1^{C}N_2^{\frac{1}{10}}e^{-\frac{1}{2}cN^{\frac{\sigma}{2}}}.
\end{equation}
Summing over $n,m$,
\begin{equation}\label{b46}
{\rm mes}\{\omega\in[0,1]:(\omega,x_0+n\omega)\in S_{2},\exists\sqrt{N_2}<|n|<2N_2\}
<N_2^{2}N_1^{C}N_2^{\frac{1}{10}}e^{-\frac{1}{2}cN^{\frac{\sigma}{2}}}<e^{-cN^{\frac{\sigma}{4}}}.
\end{equation}
From (\ref{b42}), (\ref{b46}), we exclude an $\omega$-set of measure $N_2^{-\frac{1}{11}}+e^{-cN^{\frac{\sigma}{4}}}<N_2^{-\frac{1}{12}}$.
Summing over $|j|\leq N_1$, we get an $\omega$-set $\mathcal{R}_{N}, {\rm mes}\mathcal{R}_{N}<N_2^{-\frac{1}{13}}<N^{-10}$,
such that for $\omega\notin\mathcal{R}_{N}$,
\begin{equation}\label{b47}
|\xi_{j}|< e^{-\frac{c_0}{50}|j|},\quad \forall|j|\in\left[\frac{1}{2}N^{C_2},N^{C_2}\right].
\end{equation}

Let
\begin{equation}\label{b48}
\mathcal{R}=\bigcap_{N_0\geq 1}\bigcup_{N\geq N_0}\mathcal{R}_{N},
\end{equation}
then ${\rm mes}\mathcal{R}=0$. If $\omega\notin\mathcal{R}$, then by (\ref{b48}),
there is $N_0\geq 1$ such that $\omega\notin\mathcal{R}_{N}, \forall N\geq N_0$.
By (\ref{b47}),
\begin{equation}\label{b49}
|\xi_{j}|< e^{-\frac{c_0}{50}|j|},
\quad \forall|j|\in\bigcup_{N\geq N_0}\left[\frac{1}{2}N^{C_2},N^{C_2}\right]=\left[\frac{1}{2}N_{0}^{C_2},\infty\right).
\end{equation}
This proves (\ref{b19})  and Theorem \ref{t5.5}.
\end{proof}

\subsection*{Acknowledgment}
This paper was supported by  National Natural
Science Foundation of China (No. 11790272 and No. 11771093).

\end{document}